\newtheorem{theorem}{Theorem}[section]
\newtheorem{prop}[theorem]{Proposition}
\newtheorem{lemma}[theorem]{Lemma}
\newtheorem{corollary}[theorem]{Corollary}
\newtheorem{conjecture}[theorem]{Conjecture}
\newtheorem{observation}[theorem]{Observation}
\newtheorem{question}[theorem]{Question}
\newtheorem{remark}[theorem]{Remark}
\newcommand{\ZZ}{{\mathbb {Z}}}
\newcommand{\cyc}{{\operatorname{cyc}}}
\newcommand{\odd}{{\operatorname{odd}}}
\newcommand{\even}{{\operatorname{even}}}
\dedicatory{Dedicated to Gil Kalai, a long time friend, collaborator, mentor and a source of inspiration, \\
on the occasion of his 70th birthday}
\title{Circular Sorting}
\author{Ron M.\ Adin}
\address{Department of Mathematics, Bar-Ilan University, Ramat-Gan 52900, Israel}
\email{radin@math.biu.ac.il}
\author{Noga Alon}
\address{Department of Mathematics, 
Princeton University, Princeton, NJ 08544, USA}
\email{nalon@math.princeton.edu}
\thanks{The second author was supported in part by NSF grant DMS-2154082.%
}
\author{Yuval Roichman}
\address{Department of Mathematics, Bar-Ilan University, Ramat-Gan 52900, Israel}
\email{yuvalr@math.biu.ac.il}
\date{August 5, 2025}
\subjclass{05A05, 68P10}
\keywords{Sorting, adjacent transpositions, cyclic permutation, random permutation, multiplicative group modulo $n$}
\begin{document}

\begin{abstract}
    We determine the maximal number of steps required to sort $n$ labeled points on a circle by adjacent swaps. Lower bounds for sorting by all swaps, not necessarily adjacent, are given as well.
\end{abstract}

\maketitle

\tableofcontents

\section{Introduction}
\vspace{0.2cm}


In this paper we determine the maximal number of steps required to sort 
$n$ labeled points on a circle by adjacent swaps. 
This problem was explored, for example, in the context of micro-rearrangements of gene order in viruses; 
see, e.g.,~\cite{Chen}.
The analogous problem for labeled points on a line 
is classical. 
In fact, permutation sorting by adjacent transpositions may be traced back to early works in combinatorial group theory. 
Since Cayley graphs are vertex transitive, the (worst case) sorting time of a permutation by a set of involutions (generating the symmetric group) is equal to the diameter of the corresponding Cayley graph; equivalently, to the maximal length of an element in the prescribed generating set. 
Of special interest is sorting by adjacent transpositions, where the diameter is  
the maximal Coxeter length of an element. For any finite reflection group, 
the maximal Coxeter length is equal 
to the number of all reflections~\cite[\S 1.7]{Humphreys}. In the symmetric group $S_n$, this is the number of all transpositions, namely $\binom{n}{2}$.  
For sorting permutations by other sets of involutions see, 
e.g., \cite{GT}, \cite{Gates}, \cite{Je}, \cite{Feng}, 
\cite{ZBSY} and follow-ups. 
The last three papers among these five discuss the problem of sorting by all 
adjacent transpositions together with the transposition $(1,n)$, which
is closely related (though not identical) to one of 
the problems we consider here. 


A {\em cyclic permutation} of order $n$, also called circular permutation, is an equivalence class of arrangements of the numbers $1, \ldots, n$ on a circle, where all cyclic shifts of an arrangement are considered equivalent~\cite{Brualdi}. 
Permutations are linearly ordered sets, while   
cyclic permutations are ``cyclically ordered'' sets. 
An axiomatic approach to cyclic orders was developed by Huntington~\cite{Huntington}, and extended to partial cyclic orders by Megiddo~\cite{Megiddo}.
This concept was extensively studied and used since then.  
A different type of cyclic order, called toric order, was introduced by Develin, Macauley and Reiner~\cite{toric}.  
Total 
cyclic orders, as well as total toric orders, on the set $[n]$ correspond to the $(n - 1)!$ cyclic permutations. 





Let $\pi$ be a cyclic permutation of order $n$, 
represented by a labeling of the vertices of a cycle of length $n$ by the elements of $[n]=\{1,2,\ldots,n\}$ in a bijective way. 
An {\em adjacent swap} is any labeling obtained from $\pi$ by swapping the labels of two adjacent vertices along the cycle. 
How many adjacent swaps are needed in order to convert $\pi$ into the trivial cyclic permutation $u=(1,2,\ldots ,n)$?   
Equivalently, what is the diameter of the graph on all cyclic permutations represented as above,
in which two cyclic permutations are adjacent 
if and only if one can be obtained from the other by a single adjacent swap?  
This graph is vertex transitive, and hence its diameter is indeed the maximum, over all cyclic permutations $\pi$, of the minimum number of adjacent swaps required to transform $\pi$  into $u$. 
Let $f(n)$ denote this diameter. We prove the following.

\begin{theorem}\label{t:main}
    For any $n \ge 1$,
    \begin{equation}\label{eq:main}
            f(n) = \left\lfloor \frac{(n-1)^2}{4} \right\rfloor .
    \end{equation}
\end{theorem}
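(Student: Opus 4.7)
The plan is to establish matching upper and lower bounds on $f(n)$.

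For the upper bound $f(n)\le\lfloor(n-1)^2/4\rfloor$, I would reduce the cyclic sorting problem to a linear one by choosing the cut-point wisely. Given a cyclic permutation $\pi$ with representative $(\pi_1,\ldots,\pi_n)$, let $\pi^{(k)}$ be the linear sequence obtained by cutting between positions $k$ and $k+1$. The linear adjacent transpositions $s_1,\ldots,s_{n-1}$ alone sort $\pi^{(k)}$ to the identity in $\operatorname{inv}(\pi^{(k)})$ steps, giving the a priori bound $f(n)\le\max_{\pi}\min_{k}\operatorname{inv}(\pi^{(k)})$. The crux is showing $\min_k\operatorname{inv}(\pi^{(k)})\le\lfloor(n-1)^2/4\rfloor$ for every $\pi$. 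A useful identity is that each pair $\{a,b\}$ with $a<b$ is inverted in exactly $d_{ab}$ of the $n$ cuts, where $d_{ab}$ is the clockwise cyclic distance from $a$ to $b$, so $\sum_k\operatorname{inv}(\pi^{(k)})=\sum_{a<b}d_{ab}$. The naive averaging bound is unfortunately too weak (for the reverse it gives average $(n-1)(2n-1)/6$, which exceeds the target), so I would instead seek an extremal/structural argument identifying the $\pi$ maximizing $\min_k\operatorname{inv}(\pi^{(k)})$ and verifying the bound directly for this extremal $\pi$.

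For the lower bound $f(n)\ge\lfloor(n-1)^2/4\rfloor$, I would exhibit the reverse $w_0=(n,n-1,\ldots,1)$ as the extremal witness. Cutting at position $k$ yields $(n-k,n-k-1,\ldots,1,n,n-1,\ldots,n-k+1)$, two decreasing blocks of sizes $n-k$ and $k$, with inversion count $\binom{n-k}{2}+\binom{k}{2}$. Minimising over $k$ gives $\lfloor(n-1)^2/4\rfloor$ at $k=\lfloor n/2\rfloor$, which matches the upper bound and also furnishes an explicit sort of $w_0$ in exactly $\lfloor(n-1)^2/4\rfloor$ steps (reverse each block using linear transpositions).

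Proving that $s_0=(1,n)$ cannot shortcut this sort is the principal obstacle. A single application of $s_0$ can change the inversion count of any fixed linearisation by $\pm(2n-3)$, so inversion count is not a monotone potential. One partial approach---recording the cyclic orientation of each triple $\{a,b,c\}$---yields only $L\ge\binom{n}{3}/(n-2)=n(n-1)/6$, since the reverse flips every triple orientation relative to the identity while each adjacent swap flips exactly $n-2$ of them; this falls short for $n\ge 6$. To close the gap to $\lfloor(n-1)^2/4\rfloor$, I would construct a finer potential $\Phi$, perhaps a pair-weighted statistic combining linear inversions with a tally of cyclic crossings, arranged so that $\Phi(w_0)-\Phi(r_k)\ge\lfloor(n-1)^2/4\rfloor$ for every rotation $r_k$ and $\Phi$ decreases by at most one per generator step. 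Designing this potential to correctly charge the $s_0$-swap is the step I expect to be the most delicate in the proof.
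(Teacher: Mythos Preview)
Both halves of your plan contain a decisive step that is only promised, not carried out. For the upper bound, the reduction $f(n)\le\max_\pi\min_k\operatorname{inv}(\pi^{(k)})$ is fine, but the inequality $\min_k\operatorname{inv}(\pi^{(k)})\le\lfloor(n-1)^2/4\rfloor$ is the whole content, and after noting that averaging over $k$ fails you only say you ``would seek an extremal/structural argument''. The paper does \emph{not} fix a single cut. It splits the cycle (via a discrete intermediate value argument) into two arcs $A_1,A_2$, each holding about half of the ``small'' labels $\{1,\dots,\lceil n/2\rceil\}$, and compares two sorting procedures: push the small labels of each arc to its top quarter, or to its bottom quarter. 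The point is that (i) for each small label the two target positions bracket its current position, so the two shifting costs sum to a fixed quantity, and (ii) a cross-arc pair $(s_1,s_2)\in S_1\times S_2$ is inverted after process~1 if and only if it is \emph{not} inverted after process~2, so in the sum of the two completion costs each such pair is counted exactly once. Adding gives $(f_1{+}g_1)+(f_2{+}g_2)\le 2\lfloor(n-1)^2/4\rfloor$, and the cheaper option finishes. This averaging over two competing processes is what replaces your missing extremal argument; note also that the resulting sort does not avoid any single edge, so it is genuinely different from your cut-and-linearise scheme.

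For the lower bound, you correctly recognise that the min-cut inversion value of the reverse is not by itself a lower bound (the wrap-around swap could shortcut), and that the triple-orientation potential gives only $n(n-1)/6$; but the ``finer potential $\Phi$'' you propose is never constructed, and designing one that drops by at most $1$ under every generator yet separates the reverse from every rotation of the identity by $\lfloor(n-1)^2/4\rfloor$ is precisely the difficulty. The paper sidesteps potentials entirely and inducts on $n$ in steps of two. In the reverse $w_{n,k}(i)\equiv k-i\pmod n$, take the $2$-cycle $\{i,j\}$ of maximal circular gap $d$. In any sorting sequence, the swaps not touching $\{i,j\}$ preserve the relative cyclic order of $[n]\setminus\{i,j\}$ and therefore, on their own, constitute a cyclic sorting of the restriction---which after relabeling is again a reverse on $n-2$ points; meanwhile the swaps that do touch $\{i,j\}$ number at least $2d-1$, since $i$ and $j$ must be interchanged across circular distance $d$. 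Adding the inductive bound for $n-2$ to $2d-1$ (with $d=(n-1)/2$ for odd $n$, and a short parity case analysis when $n$ is even) yields exactly $\lfloor(n-1)^2/4\rfloor$. This ``peel off a farthest $2$-cycle and induct'' is the idea your plan is missing.
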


It is worth noting that in~\cite{ZBSY} it is proved that any permutation can be sorted by at most $\left\lfloor \frac{n^2}{4} \right\rfloor$ adjacent cyclic transpositions, and this is tight. Although this appears to be closely related to the statement of Theorem \ref{t:main}, we do not see a way to derive either of these two results from the other, and the proofs are completely different.

The rest of the paper is organized as follows. 
In Subsection~\ref{sec:adjacent_upper} we prove that the RHS of Equation~\eqref{eq:main} is an upper bound for the sorting time of a cyclic permutation by adjacent swaps.   
In Subsection~\ref{sec:adjacent_lower} we prove that this upper bound is tight, completing the proof of Theorem~\ref{t:main}. 
Finally, in Section~\ref{sec:all} we obtain bounds for the sorting time of cyclic permutations by all possible swaps, not necessarily adjacent.

\section{Sorting by adjacent swaps}\label{sec:adjacent}

In this section,
all swaps mentioned are adjacent swaps.

\subsection{Upper bound}\label{sec:adjacent_upper}

In this subsection we prove the following. 

\begin{theorem}\label{t:upper_bound}
   For any $n \ge 1$,
    \[
        f(n) \le \left\lfloor \frac{(n-1)^2}{4} \right\rfloor .
    \]
\end{theorem}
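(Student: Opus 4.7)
My plan is to reduce cyclic sorting to classical linear sorting via a carefully chosen linearization of $\pi$.

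Write $\pi=(\pi_1,\ldots,\pi_n)$ and denote by $c^k\pi=(\pi_{k+1},\ldots,\pi_n,\pi_1,\ldots,\pi_k)$ its cyclic shift by $k$, which represents the same cyclic permutation as $\pi$. Let $I(k):=\operatorname{inv}(c^k\pi)$ be the number of linear inversions of this shift. Passing from $c^k\pi$ to $c^{k+1}\pi$ moves $\pi_{k+1}$ from the front to the back, flipping its inversion relation with each of the other $n-1$ elements. A direct count therefore yields the basic identity
\[
I(k+1)-I(k)=(n+1)-2\pi_{k+1}.
\]
Choose $k^*$ minimizing $I(k)$ and set $\sigma:=c^{k^*}\pi$. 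The minimality of $I(k^*)$, together with iteration of the identity above, gives for every $j\in\{1,\ldots,n\}$
\[
\sum_{i=1}^{j}\sigma_i\le\frac{j(n+1)}{2}.
\]

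The crux of the proof --- which I expect to be the main obstacle --- is the following inversion bound:

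\emph{Sublemma.} If $\sigma\in S_n$ satisfies $\sum_{i=1}^{j}\sigma_i\le j(n+1)/2$ for all $j\in[n]$, then $\operatorname{inv}(\sigma)\le\lfloor(n-1)^2/4\rfloor$.

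The bound is tight: it is attained by the ``double reversal'' $\sigma^*:=(\lfloor n/2\rfloor,\ldots,1,n,n-1,\ldots,\lfloor n/2\rfloor+1)$, which saturates the partial-sum condition near $j=\lfloor n/2\rfloor$ and has exactly $\binom{\lfloor n/2\rfloor}{2}+\binom{\lceil n/2\rceil}{2}=\lfloor(n-1)^2/4\rfloor$ inversions. I would prove the Sublemma by an extremal / exchange argument: among permutations obeying the partial-sum condition and maximizing the inversion count, show (via adjacent-swap exchanges and a short case analysis) that the extremizer must have the double-reversal form. An alternative route is to partition inversion pairs according to whether the smaller value lies among the first $\lfloor n/2\rfloor$ or the last $\lceil n/2\rceil$ positions, and then to bound each piece using the partial-sum condition.

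Once the Sublemma is in hand, bubble-sort $\sigma$ to $(1,2,\ldots,n)$ using exactly $\operatorname{inv}(\sigma)\le\lfloor(n-1)^2/4\rfloor$ linear adjacent transpositions; each such transposition is also a cyclic adjacent swap. Since $\sigma$ and $\pi$ represent the same cyclic permutation, and $(1,2,\ldots,n)$ represents the trivial cyclic permutation, this exhibits a sequence of at most $\lfloor(n-1)^2/4\rfloor$ adjacent swaps sorting $\pi$, proving $f(n)\le\lfloor(n-1)^2/4\rfloor$.
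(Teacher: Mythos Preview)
Your reduction is clean and the partial-sum characterization of the inversion-minimizing shift is correct. However, the proposal has a genuine gap: the Sublemma is the entire content of the theorem, and you have not proved it. You have essentially reformulated the problem---``$f(n)\le\lfloor(n-1)^2/4\rfloor$'' becomes ``$\min_k\operatorname{inv}(c^k\pi)\le\lfloor(n-1)^2/4\rfloor$''---without making progress on why the bound holds. The Sublemma is in fact a (slightly) \emph{stronger} statement than the theorem, since cyclic sorting is allowed to use the wrap-around swap $(n,1)$ and could in principle be cheaper than the best linear sort. Both suggested attacks on the Sublemma are problematic: the exchange argument stalls immediately, because swapping an ascent $\sigma_i<\sigma_{i+1}$ to gain an inversion can violate the $j=i$ partial-sum constraint, so a maximizer need not be locally rigid in any simple way; and the ``partition inversions by whether the smaller value lies in the first half'' idea gives no obvious handle, since the partial-sum constraints bound prefix \emph{sums}, not the positions or counts of small values. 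Exhibiting the tight example $\sigma^*$ only shows the Sublemma is sharp, not that it is true.

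For comparison, the paper's argument is entirely different and does not go through a single linearization. It partitions the cycle into two arcs $A_1,A_2$ with balanced counts of ``small'' and ``large'' labels (via a discrete intermediate-value argument), then considers \emph{two} competing sorting processes: push the small labels in each arc toward the top, or toward the bottom. The key averaging step is that for each small label the two target displacements sum to a fixed quantity, and each cross-arc pair is an inversion in exactly one of the two final arrangements; summing gives $f_1+g_1+f_2+g_2\le 2\lfloor(n-1)^2/4\rfloor$, so one of the two processes succeeds. This two-process averaging genuinely exploits the cyclic symmetry (the ``top half'' and ``bottom half'' cut the cycle differently from $A_1,A_2$), and it is this idea that your proposal is missing.
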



\begin{proof} 
    We distinguish four cases, according to the remainder of $n$ upon division by $4$.
    \begin{enumerate}
    
    \item[(a)]
    Assume first that $n$ is divisible by $4$: $n = 4m$. 
    Let $\pi$ be a cyclic permutation, and consider its representation as a labeled cycle $C$. Call a label small if it lies in $\{1, 2, \ldots, 2m\}$ and large if it lies in $\{2m+1, 2m+2, \ldots ,4m\}$. 
    We claim that there is a partition of $C$ into two arcs, $A_1$ and $A_2$, each being a path of $2m$ consecutive vertices along $C$, so that each $A_i$ contains exactly $m$ small labels and $m$ large labels. 
    Indeed, starting with any arc $A$ of $2m$ vertices, when we shift it by one step the number of small labels can change by at most  $1$. 
    If $A$ contains at most $m$ small labels then its complement contains at least $m$ small labels, and thus the assertion of the claim follows by the discrete intermediate value theorem.

    Fix two disjoint arcs, $A_1$ and $A_2$, each containing exactly $m$ small and $m$ large labels. Think about these two arcs as the left and right halves of the cycle $C$, and subdivide each of them into a top quarter and a bottom quarter.
    We now consider two possible sequences of swaps.
    The first shifts all the $m$ small labels of $A_1$ to its top quarter and all $m$ small labels of $A_2$ to its top quarter, 
    while the second shifts all the small labels of $A_1$ to its bottom quarter and all those of $A_2$ to its bottom quarter. 
    Let the vertices of $A_1$ be $v_1,v_2, \ldots ,v_{2m}$, in order, from top to bottom. 
    Let the small labels in $A_1$ be $s_1, s_2, \ldots ,s_{m}$, also ordered from top to bottom. 
    Then, in the first process, $s_1$ has to move to the vertex $v_1$, and in the second process it has to move to vertex $v_{m+1}$. Similarly, $s_2$ has to reach vertex $v_2$ in the first process and vertex $v_{m+2}$ in the second, and in general $s_i$ has to reach vertex $v_{i}$ in the first process and vertex $v_{i+m}$ in the second. 
    It is easy to check that the current location of each label $s_i$ is between these two destinations it has to reach (since there are $i-1$ small labels in $A_1$ before it and $m-i$ after it). 
    Therefore, the sum of the two distances from the location of $s_i$ to its two destinations in the two possible processes above is $m$.  
    The total number of swaps required to shift all small labels in $A_1$ to its top part is exactly the sum of distances of these labels from their destinations in the first process, and the analogous statement holds for the number of swaps required to shift them to the bottom part.
    The same reasoning applies, of course, to the arc $A_2$ as well. 
    Let $f_1$ denote the total number of swaps required for the first process (shifting all small labels in $A_1$ to its top part and all those in $A_2$ to its top part),
    and let $f_2$ denote the total number of swaps required for the second process. 
    By the discussion above it follows that
    \[
        f_1 + f_2
        = m \cdot m + m \cdot m 
        = 2m^2,
    \]
    since the total number of swaps in both processes in which any fixed small label moves is exactly $m$.

    Let $S_1$ denote the set of $m$ small labels in $A_1$, let $L_1$ denote the set of $m$ large labels in $A_1$, and define $S_2,L_2$ analogously for the arc $A_2$. 
    In order to complete the sorting after the first process, it suffices to sort the labels in $S_1 \cup S_2$ that lie in the top half of the circle, and sort the labels $L_1 \cup L_2$ in the bottom half. 
    The number of swaps required to do so is the number of inversions in $S_1 \cup S_2$ in the order they ended at the top, plus the number of inversions in $L_1 \cup L_2$ in the order they ended in the bottom. Denote this number by $g_1$.  
    Similarly, in order to complete the sorting after the second process it suffices to sort the labels in $S_1 \cup S_2$ that ended in the bottom  and the labels in $L_1 \cup L_2$ that ended in the top. 
    Let this sum be $g_2$. 
    The crucial observation now is that each pair of labels $s_1 \in S_1$ and $s_2 \in S_2$ form an inversion at the end of the first process  if and only if they do not form an inversion in the second. Therefore, the sum $g_1+g_2$ satisfies
    \[
        g_1 + g_2 
        \le 2 \cdot 4 \binom{m}{2} + m \cdot m + m \cdot m
        = 6m^2 - 4m.
    \]
    Indeed, the total number of inversions between pairs of labels in $S_1$, in $S_2$, in $L_1$ and in $L_2$, which are counted twice in $g_1+g_2$, is at most $2 \cdot 4 \binom{m}{2}$. 
    The total number of inversions between pairs of labels with one in $S_1$ and one in $S_2$ contributes to the sum $g_1+g_2$ only $m \cdot m$, since each such pair forms an inversion only once, and the same applies for pairs with one label in  $L_1$ and one in $L_2$.

    Summing the equality (for $f_1 + f_2$) and inequality (for $g_1 + g_2$) above, we get
    \[
        f_1 + f_2 + g_1 + g_2
        \le 8m^2 - 4m.
    \]
    Therefore either $f_1+g_1$ or $f_2+g_2$ is at most 
    \[
        4m^2 - 2m = \frac{n^2 - 2n}{4} 
        = \left\lfloor \frac{(n-1)^2}{4} \right\rfloor. 
    \]
    As we can complete the sorting with $f_1+g_1$ swaps, and also with $f_2+g_2$ swaps, this completes the proof for $n$ divisible by $4$. 

    \item[(b)]
    Assume now that $n = 4m+2$. 
    Following the previous proof, with the necessary adaptations, call a label small if it lies in $\{1,2,\ldots,2m+1\}$ and large if it lies in $\{2m+2,2m+3,\ldots,4m+2\}$.
    By the same argument as before, there is an arc $A_1$ of $2m+1$ consecutive points with exactly $m$ small (and $m+1$ large) labels.
    Its complement $A_2$ is also an arc of length $2m+1$, with $m+1$ small (and $m$ large) labels.
    View $A_1$ ($A_2$) as the left (respectively, right) half of the whole cycle.
    Consider two possible sequences of swaps: one shifting all the small labels of $A_1$ to its top, and all the small labels of $A_2$ to its top; and the other shifting all small labels of each arc to its bottom.
    The $i$-th small label in $A_1$ ($1 \le i \le m$, counting from top to bottom) will move to either the $i$-th or the $(m+1+i)$-th position in $A_1$ (again, counting from top to bottom). Its current position is somewhere between $i$ and $m+1+i$, and therefore the sum of its distances from the two possible endpoints is $m+1$.
    A similar argument for $A_2$, with $1 \le i \le m+1$ and positions $i$ and $i+m$, yields sum of distances $m$. If $f_1$ ($f_2$) is the total number of swaps required for the first (respectively, second) process, then
    \[
        f_1 + f_2 
        =  m \cdot (m+1) + (m+1) \cdot m  
        = 2m(m+1). 
    \]
    Let $S_1$ denote the set of $m$ small labels in $A_1$, let $L_1$ denote the set of $m+1$ large labels in $A_1$, and define analogously $S_2$ and $L_2$ (of sizes $m+1$ and $m$, respectively) for the arc $A_2$. 
    Denote by $g_1$ ($g_2$) the number of swaps require to sort $S_1 \cup S_2$ as well as $L_1 \cup L_2$ in the first (respectively, second) process.
    The crucial observation is that each pair of labels $s_1 \in S_1$ and $s_2 \in S_2$ form an inversion at the end of the first process if and only if they do not form an inversion in the second;
    and a similar claim for $\ell_1 \in L_1$ and $\ell_2 \in L_2$. 
    Therefore
    \[
        g_1 + g_2 
        \le 2 \cdot \left( 2 \binom{m}{2} + 2 \binom{m+1}{2} \right) + 2m(m+1)
        = 6m^2 + 2m.
    \]

    Summing the equality (for $f_1 + f_2$) and inequality (for $g_1 + g_2$) above, we get
    \[
        f_1 + f_2 + g_1 + g_2
        \le 8m^2 + 4m.
    \]
    Therefore either $f_1+g_1$ or $f_2+g_2$ is at most 
    \[
        4m^2 + 2m = \frac{(n-2)^2 + 2(n-2)}{4} 
        = \frac{n^2 - 2n}{4}
        = \left\lfloor \frac{(n-1)^2}{4} \right\rfloor. 
    \]
    This completes the proof for $n = 4m+2$. 


    \item[(c)]
    Next assume that $n = 4m+1$.  
    Call a label small if it lies in $\{1,2,\ldots,2m\}$ and large if it lies in $\{2m+1,2m+2,\ldots,4m+1\}$.
    Then there is an arc $A_1$, consisting of $2m$ consecutive points, with exactly $m$ small (and $m$ large) labels.
    Its complement $A_2$ is an arc of length $2m+1$, with $m$ small (and $m+1$ large) labels. 
    Then, using notations as before,
    \[
        f_1 + f_2 
        =  m \cdot m + m \cdot (m+1)
        = 2m^2 + m
    \]
    and
    \[
        g_1 + g_2 
        \le 2 \cdot \left( 3 \binom{m}{2} +  \binom{m+1}{2} \right) + m^2 + m(m+1)
        = 6m^2 - m.
    \]
    Therefore either $f_1+g_1$ or $f_2+g_2$ is at most 
    \[
        4m^2 = \frac{(n-1)^2}{4}. 
    \]
    This completes the proof for $n = 4m+1$.

    \item[(d)]
    Finally, assume that $n = 4m+3$.  
    Call a label small if it lies in $\{1,2,\ldots,2m+2\}$ and large if it lies in $\{2m+3,2m+4,\ldots,4m+3\}$.
    By a slightly modified argument (considering only arcs that do not contain a specific point with a large label), there is an arc $A_1$, consisting of $2m+1$ consecutive points, with exactly $m+1$ small (and $m$ large) labels.
    Its complement $A_2$ is an arc of length $2m+2$, with $m+1$ small (and $m+1$ large) labels. 
    Then, using notations as before,
    \[
        f_1 + f_2 
        = (m+1) \cdot m + (m+1) \cdot (m+1)
        = 2m^2 + 3m + 1
    \]
    and
    \[
        g_1 + g_2 
        \le 2 \cdot \left( 3\binom{m+1}{2} + \binom{m}{2} \right) +  (m+1)^2 + m(m+1)
        = 6m^2 + 5m + 1.
    \]
    Therefore either $f_1+g_1$ or $f_2+g_2$ is at most 
    \[
        4m^2 + 4m +1 = (2m+1)^2
        = \frac{(n-1)^2}{4}. 
    \]
    This completes the proof for $n = 4m+1$.


\end{enumerate}
Having dealt with all four cases, we have now completed the proof of Theorem~\ref{t:upper_bound}.
\end{proof}

\subsection{Lower bound}\label{sec:adjacent_lower}

Recall that any swap mentioned in this section is an adjacent swap.

In this subsection we prove

\begin{theorem}\label{t:lower_bound} 
    For any $n \ge 1$,
    \[
        f(n) \ge \left\lfloor \frac{(n-1)^2}{4} \right\rfloor .
    \]
\end{theorem}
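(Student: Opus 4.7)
The plan is to exhibit, for each $n$, an extremal cyclic permutation $\pi_0$ and prove that sorting it requires at least $\lfloor (n-1)^2/4 \rfloor$ adjacent swaps. The natural candidate is the reverse $\pi_0 = (1, n, n-1, \ldots, 2)$, possibly with a small modification (e.g., swapping two adjacent labels) when $n \bmod 4 \in \{0, 3\}$ to accommodate parity-type constraints; the argument will likely split into the same four cases of $n \bmod 4$ used in the upper-bound proof. Direct verification for small $n \le 7$ supports the reverse (or a slight perturbation thereof) as an extremal example.

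The key tool is a potential function $\Phi$ on cyclic permutations satisfying $\Phi(u) = 0$, $\Phi(\pi_0) \ge \lfloor (n-1)^2/4 \rfloor$, and $|\Phi(\pi) - \Phi(\pi')| \le 1$ for any two cyclic permutations $\pi, \pi'$ differing by a single adjacent swap. A natural first candidate is
\[
    \Phi_0(\pi) \;=\; \frac{1}{n-2}\sum_{r=1}^{n}\mathrm{inv}_r(\pi) \;=\; \frac{1}{n-2}\sum_{1\le i<j\le n} d_{ij}(\pi),
\]
where $\mathrm{inv}_r(\pi)$ counts the linear inversions of the rotation of $\pi$ starting at label $r$, and $d_{ij}(\pi)$ is the clockwise arc length from the position of label $i$ to the position of label $j$. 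A case analysis of a swap of labels $a, b$ at adjacent positions shows that the $\pm 1$ contributions of the $2n-4$ pairs involving exactly one of $\{a,b\}$ cancel almost entirely, leaving $|\Delta \sum_r \mathrm{inv}_r| \le n-2$. However, computing $\Phi_0(\pi_0)$ for the reverse gives only $n(n-1)/6$ — short of the target $(n-1)^2/4$ by a constant factor of roughly $3/2$.

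The main obstacle is closing this gap. I would attempt to strengthen $\Phi_0$ by weighting $d_{ij}$ nonuniformly, using weights tied to the balanced arc decomposition exploited in the upper-bound proof: inversions that cross the ``center'' of an optimal arc split should be penalized more heavily, on the grounds that such inversions force additional swaps beyond what the rotationally-averaged count detects. For odd $n$, parity is a well-defined invariant on cyclic permutations (since cyclic rotation is an even permutation when $n$ is odd), which combined with $\Phi_0$ can lift the bound by up to one unit. The technical heart of the argument will be verifying that this refined weighted potential still changes by at most one per adjacent swap — likely a case analysis across the four residues $n \bmod 4$ mirroring the structure of the upper-bound proof.
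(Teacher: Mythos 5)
Your proposal identifies the right extremal example (the reversal $(n,n-1,\dots,1)$, which is exactly the one the paper uses, and which in fact needs no modification in any residue class mod $4$), but it does not contain a proof: the entire argument rests on a potential function $\Phi$ that is $1$-Lipschitz under adjacent swaps and takes value at least $\lfloor (n-1)^2/4\rfloor$ on the reversal, and you never construct one. The one candidate you do analyze, the rotation-averaged inversion count $\Phi_0$, you yourself compute to give only about $n(n-1)/6$ on the reversal, short of the target by a constant factor. The proposed repair --- reweighting the terms $d_{ij}$ according to an ``optimal arc split'' and checking the Lipschitz property case by case --- is not carried out, and it is far from clear that it can be: any single fixed weighting must remain $1$-Lipschitz under \emph{every} adjacent swap at \emph{every} stage of \emph{every} sorting sequence, while the arc decomposition in the upper-bound argument is tailored to one particular configuration. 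So the ``technical heart'' you defer is precisely the missing idea, and there is no evidence offered that it exists.

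For comparison, the paper avoids any global potential by arguing inductively. Writing the reversal (after a cyclic shift) as the involution $w_{n,k}(i)\equiv k-i \pmod n$, it picks the $2$-cycle $\{i,j\}$ of maximal circular gap (about $n/2$), and observes that swaps touching $\{i,j\}$ do not alter the relative cyclic order of the remaining $n-2$ points, whose induced permutation is again of the form $w_{n-2,\ell}$. Hence any sorting sequence splits into at least $(n-3)^2/4$ swaps (by induction) internal to the complement, plus at least $2\cdot\frac{n-1}{2}-1=n-2$ swaps needed to transport and interchange $i$ and $j$; these add up to exactly $(n-1)^2/4$ for odd $n$, with a parallel four-case bookkeeping for even $n$. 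If you want to salvage your approach, the realistic route is to abandon the search for a Lipschitz potential and instead look for this kind of additive decomposition of an arbitrary sorting sequence.
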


\begin{proof}
By induction on $n$, carried out separately for odd and even values of $n$.

Assume first that $n$ is odd.
We shall prove that any circular sorting of the cyclic permutation 
$(n,n-1,\ldots,1)$ to the trivial cyclic permutation $u = (1,2,\ldots,n)$ 
requires at least $(n-1)^2/4$ swaps. 
This implies the claimed lower bound on the diameter $f(n)$.

The claim clearly holds for $n = 1$.
For the induction step, assume that $n > 1$ is odd, and that the claim holds for $n-2$.
Consider an arbitrary circular sorting of $(n,n-1,\ldots,1)$ to $u$. 
This can be viewed as a (non-circular) sorting to the identity permutation, using affine swaps from the set $\{(1,2), (2,3), \ldots,(n-1,n),(n,1)\}$, of a suitable cyclic shift of the permutation $w$, 
which is defined by $w(i) := n+1-i$ $(1 \le i \le n)$. 
Such a cyclic shift is a permutation $w_{n,k}: [n] \to [n]$ of the form 
\[
    w_{n,k}(i) \equiv k - i \pmod n
    \qquad (\forall i),
\]
for some fixed integer $k$. 
Observe that, for odd $n$ and any $k$, $w_{n,k}$ is an involution with one fixed point.  

Let $d$ denote circular distance on the set of $n$ points on the circle, namely
\[
    d(i,j) := \min\{|i-j|, n - |i-j|\}
    \qquad (\forall i,j \in [n]).
\]
For any $i \in [n]$, define the {\em gap} of $i$ to be the distance $d(i,w_{n,k}(i))$. 
The gap of the unique fixed point is $0$. 
The other gap values are the integers $1 \le d \le (n-1)/2$, each attained by exactly two points forming a 2-cycle of $w_{n,k}$.
Let $i$ and $j = w_{n,k}(i)$ be the two points having maximal gap $d = (n-1)/2$.
Denote $A := \{i,j\}$ and $B := [n] \setminus A$; clearly, $A$ and $B$ are invariant under $w_{n,k}$. The restriction of $w_{n,k}$ to $B$ can be viewed, 
after a suitable relabeling of the points, 
as the permutation $w_{n-2,\ell}$ for some $\ell$.

Consider now a sequence of swaps that sorts $w_{n,k}$ to the identity permutation. 
Distinguish two kinds of swaps: those involving only elements of $B$, and those that involve at least one element of $A$.
The latter swaps do not change the relative order of the elements of $B$, and therefore
the former swaps yield a sorting of $w_{n-2,\ell}$ to the identity permutation.
By the induction hypothesis, this requires at least $(n-3)^2/4$ swaps.
Each of the remaining swaps involves at least one element of $A$, and it is clear that this part of the sorting, eventually interchanging $i$ and $j$ which are at circular distance $(n-1)/2$, requires at least $2 \cdot (n-1)/2 -1 = n-2$ swaps. 
Altogether, the sequence contains at least 
\[
    \frac{(n-3)^2}{4} + (n-2) 
    = \frac{(n-1)^2}{4} 
\]
swaps, as claimed.
This proves the claim for any odd $n$.

Assume now that $n$ is even. 
Again, we can view a circular sorting of the cyclic permutation $(n,n-1,\ldots,1)$ to the cyclic permutation $u$ as a sorting to the identity permutation, using affine swaps, of a permutation $w_{n,k}: [n] \to [n]$ of the form
\[
    w_{n,k}(i) \equiv k - i \pmod n 
    \qquad (\forall i),
\]
for some fixed $k$.
This is again an involution, but now (for $n$ even) there are two options for its number of fixed points: $w_{n,k}$ has two fixed points if $k$ is even, and none if $k$ is odd.
We shall prove that the number of swaps needed to sort $w_{n,k}$ to the identity permutation is at least $N_{n,k}$, where
    \[
        N_{n,k} = 
        \begin{cases}
            (n^2-2n)/4, & \text{if } k - n/2 \text{ is odd;} \\
            (n^2-2n+4)/4, & \text{if } k - n/2 \text{ is even.}
        \end{cases}
    \]
Note that these numbers are $\lfloor (n-1)^2/4 \rfloor$ and $\lfloor (n-1)^2/4 \rfloor + 1$, respectively, so that our claim implies the required lower bound on the diameter $f(n)$.

The proof will proceed by induction on (even values of) $n$. 
The claim clearly holds for $n = 2$: $w_{2,k}$ is the identity permutation for even $k$ (with $N_{2,k} = 0$), and the non-identity element of $S_2$ for odd $k$ (with $N_{2,k} = 1$).

For the induction step, assume that $n > 2$ is even, and that the claim holds for $n-2$. We consider four cases, depending on the parity of $k$ and of $n/2$.
\begin{enumerate}
    \item[(a)]
    Assume that both $k$ and $n/2$ are even.
    The involution $w_{n,k}$ has two fixed points, and all its gaps are even.
    The minimal gap $0$ and maximal gap $n/2$ are each attained by two points, while each intermediate value $2 \le 2d \le (n-4)/2$ is attained by four points.
    Let $i$ and $j = w_{n,k}(i)$ be the two points with maximal (even) gap $n/2$; 
    denote $A := \{i,j\}$ and $B := [n] \setminus A$.
    Clearly, $A$ and $B$ are invariant under $w_{n,k}$. The restriction of $w_{n,k}$ to $B$ can be viewed, after a suitable relabeling of the points, as the permutation $w_{n-2,\ell}$ for some even $\ell$.
    Noting that $(n-2)/2$ is odd, we thus get at least
    \[
        N_{n-2,\ell} + (2 \cdot n/2 -1) 
        = \frac{(n-2)^2-2(n-2)}{4} + (n-1) 
        = \frac{n^2-2n+4}{4} = N_{n,k}
    \]
    swaps.

    \item[(b)]
    Assume that $k$ is even but $n/2$ is odd.
    The involution $w_{n,k}$ has two fixed points, and all its gaps are even.
    The minimal gap $0$ is attained by two points, while each other gap $2 \le 2d \le (n-2)/2$ is attained by four points.
    Let $i$ and $j = w_{n,k}(i)$ be two of the points with maximal (even) gap $(n-2)/2$; 
    denote $A := \{i,j\}$ and $B := [n] \setminus A$.
    Clearly, $A$ and $B$ are invariant under $w_{n,k}$. The restriction of $w_{n,k}$ to $B$ can be viewed, after a suitable relabeling of the points, as the permutation $w_{n-2,\ell}$ for some even $\ell$.
    Noting that $(n-2)/2$ is even, we thus get at least
    \[
        N_{n-2,\ell} + (2 \cdot (n-2)/2 -1) 
        = \frac{(n-2)^2-2(n-2)+4}{4} + (n-3) 
        = \frac{n^2-2n}{4} = N_{n,k}
    \]
    swaps.

    \item[(c)]
    Assume that $k$ is odd and $n/2$ is even.
    The involution $w_{n,k}$ has no fixed points, and all its gaps are odd.
    Each odd number $1 \le 2d-1 \le (n-2)/2$ is attained as a gap by four points.
    Let $i$ and $j = w_{n,k}(i)$ be two of the points with maximal (odd) gap $(n-2)/2$; 
    denote $A := \{i,j\}$ and $B := [n] \setminus A$.
    Clearly, $A$ and $B$ are invariant under $w_{n,k}$. The restriction of $w_{n,k}$ to $B$ can be viewed, after a suitable relabeling of the points, as the permutation $w_{n-2,\ell}$ for some odd $\ell$.
    Noting that $(n-2)/2$ is odd, we thus get at least
    \[
        N_{n-2,\ell} + (2 \cdot (n-2)/2 -1) 
        = \frac{(n-2)^2-2(n-2)+4}{4} + (n-3) 
        = \frac{n^2-2n}{4} = N_{n,k}
    \]
    swaps.

    \item[(d)]
    Assume that both $k$ and $n/2$ are odd.
    The involution $w_{n,k}$ has no fixed points, and all its gaps are odd.
    The maximal gap $n/2$ is attained by two points, while each other odd value $1 \le 2d-1 \le (n-4)/2$ is attained by four points. 
    Let $i$ and $j = w_{n,k}(i)$ be the two points with maximal (odd) gap $n/2$; 
    denote $A := \{i,j\}$ and $B := [n] \setminus A$.
    Clearly, $A$ and $B$ are invariant under $w_{n,k}$. The restriction of $w_{n,k}$ to $B$ can be viewed, after a suitable relabeling of the points, as the permutation $w_{n-2,\ell}$ for some odd $\ell$.
    Noting that $(n-2)/2$ is even, we thus get at least
    \[
        N_{n-2,\ell} + (2 \cdot n/2 -1) 
        = \frac{(n-2)^2-2(n-2)}{4} + (n-1) 
        = \frac{n^2-2n+4}{4} = N_{n,k}
    \]
    swaps.

\end{enumerate}

This proves our claim for any even $n$, and completes the proof of Theorem~\ref{t:lower_bound}.
\end{proof}

\begin{remark}
    {\rm Theorem \ref{t:main} determines the maximum number $f(n)$ of swaps needed to sort a cyclic permutation of $[n]$. 
    It may be interesting to determine or estimate the number of such cyclic permutations that can be sorted by $k$ swaps, for all $k \leq f(n)$.}
\end{remark}

\section{Sorting by all swaps}\label{sec:all}

The circular sorting question can be asked when a swap of any two (not necessarily adjacent)
elements is allowed. 
In this case 
$n-2$ swaps always suffice (Observation~\ref{obs:t0}).  
For $n=p^k$, where $p$ is an odd prime, at least $n-1-\log_p n$ swaps are required  (Proposition~\ref{t:prime_power}).  
It follows that for every prime $n$,  
the bound $n-2$ is tight (Corollary~\ref{cor:t_prime}). 
For general $n$ we get a lower bound of $n-O(\log n)$ 
(Theorem~\ref{t:all_lower}).  

\subsection{Upper bound}

Let $c := (1,2,\dots,n)$ be an $n$-cycle in $S_n$, and let $C_n = \langle c \rangle$ be the cyclic subgroup of $S_n$ generated by $c$. 
A cyclic permutation, namely an equivalence class $[\pi]$ where $\pi \in S_n$, may be identified with a coset $\pi C_n$.

Denote by $t([\pi])$ the sorting time of a cyclic permutation $[\pi]$ to the trivial cyclic permutation $[c]$, where a permissible step is a multiplication (on the left, or equivalently on the right) by any transposition, and let 
\[
    t(n) := \max_{\pi \in S_n} t([\pi]).
\]
Consider the graph on all cyclic permutations in which two cyclic permutations are adjacent if and only if one can be obtained from the other by a single a swap of any two letters. 
Since this graph is vertex transitive, its diameter is equal to  $t(n)$. 


For a permutation $\pi\in S_n$, let $\cyc(\pi)$ be the number of cycles in $\pi$. Observe that, for any $\pi\in S_n$, the sorting time of $[\pi]$ is
\[
    t([\pi]) 
    = \min_{\sigma\in \pi C_n} (n -\cyc(\sigma)).
 \]
Hence
\[
    t(n)
    = \max_{\pi\in S_n} \min_{\sigma\in \pi C_n} (n - \cyc(\sigma)).
\]

\begin{observation}\label{obs:t0}
    For every $n \ge 2$,
    \[
        t(n) \le n-2.
    \]
\end{observation}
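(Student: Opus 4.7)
The plan is to show that, for every $\pi \in S_n$ with $n \ge 2$, some element of the coset $\pi C_n$ has at least two cycles; combined with the displayed formula $t([\pi]) = \min_{\sigma \in \pi C_n}(n - \cyc(\sigma))$, this yields $t([\pi]) \le n-2$ for all $\pi$, and hence $t(n) \le n-2$.

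First I would reduce the problem to producing a fixed point: for $n \ge 2$, any $n$-cycle in $S_n$ has no fixed points, so a permutation with at least one fixed point automatically has $\cyc \ge 2$. It therefore suffices to exhibit some $k \in \{0,1,\ldots,n-1\}$ for which $\pi c^k$ fixes at least one element of $[n]$; note that as $k$ ranges over $\{0,1,\ldots,n-1\}$, the products $\pi c^k$ run through the whole coset $\pi C_n$.

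Next I would carry out an averaging count across that coset. Identify $[n]$ with $\ZZ/n\ZZ$, so that $c^k(i) \equiv i + k \pmod{n}$. The equation $\pi c^k(i) = i$ is equivalent to $k \equiv \pi^{-1}(i) - i \pmod{n}$, so for each $i \in [n]$ there is exactly one $k \in \{0,1,\ldots,n-1\}$ for which $i$ is a fixed point of $\pi c^k$. Summing over $i$ gives
\[
    \sum_{k=0}^{n-1} |\mathrm{Fix}(\pi c^k)| = n,
\]
so the average number of fixed points along the coset is $1$. In particular some $\pi c^k$ fixes at least one point, and by the preceding reduction that $\pi c^k$ has at least two cycles.

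There is no real technical obstacle here; the key observation is that the regular action of $C_n$ forces the total fixed-point count along any coset to equal exactly $n$, after which a one-line averaging argument finishes the job. The same identity gives a stronger structural refinement (a bijection between $i \in [n]$ and the pairs $(k,i)$ realizing a fixed point) that may be a convenient starting point for the lower-bound arguments later in Section~\ref{sec:all}.
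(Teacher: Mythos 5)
Your proof is correct and follows the same route as the paper: the paper's one-line argument is precisely that every permutation has a cyclic shift with a fixed point (hence at least two cycles), and you have simply supplied the standard averaging proof of that fact via $\sum_{k=0}^{n-1} |\mathrm{Fix}(\pi c^k)| = n$. No issues.
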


\begin{proof}
    Every permutation has a cyclic shift with a fixed point, thus with at least 2 cycles.
\end{proof}

Computer experimentation show that, for $n \le 11$, this upper bound is attained only for prime values of $n$.  
In the following section we prove a general lower bound for prime powers, implying that the upper bound is indeed tight for every prime $n$. 



%




\subsection{Primes and prime powers} 

\begin{prop}\label{t:prime_power}
    If $n = p^k$, where $p$ is an odd prime and $k \ge 1$, then   
    \[
        t(n) \ge n-k-1 = n - \log_p n - 1.
    \]
\end{prop}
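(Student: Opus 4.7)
The plan is to exhibit a specific cyclic permutation $[\pi]$ whose sorting time already reaches the claimed lower bound. Since $t([\pi]) = \min_{\sigma \in \pi C_n}(n - \cyc(\sigma))$, this amounts to producing a coset $\pi C_n$ in which \emph{every} element has at most $k+1$ cycles. The candidate I would use is the permutation $\pi \colon x \mapsto ax$ on $\ZZ/n\ZZ$ (identified with $[n]$ so that $c$ acts as $x \mapsto x+1$), where $a$ is a primitive root modulo $n = p^k$; such an $a$ exists because $p$ is odd.

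With this setup, $\pi C_n$ is the family of affine maps $\sigma_b \colon x \mapsto ax + b$ for $b \in \ZZ/n\ZZ$. The first key observation is that $\gcd(a-1, n) = 1$, because $a$ is a primitive root modulo $p$ and hence $a \not\equiv 1 \pmod p$. Consequently each $\sigma_b$ has a unique fixed point $x_b = b(1-a)^{-1}$, and the translation $y = x - x_b$ conjugates $\sigma_b$ into the single linear map $\tau \colon y \mapsto ay$. Therefore $\cyc(\sigma_b) = \cyc(\tau)$ for every $b$, and the entire problem collapses to a single cycle count.

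To compute $\cyc(\tau)$ I would use the filtration $\ZZ/p^k\ZZ \supset p\ZZ/p^k\ZZ \supset \cdots \supset p^k\ZZ/p^k\ZZ = \{0\}$. Each layer $L_i := (p^i\ZZ/p^k\ZZ) \setminus (p^{i+1}\ZZ/p^k\ZZ)$ for $0 \le i \le k-1$ is $\tau$-invariant and has size $\phi(p^{k-i}) = p^{k-i-1}(p-1)$. Because any surjection of cyclic groups sends a generator to a generator, a primitive root modulo $p^k$ is also a primitive root modulo each $p^{k-i}$; hence the $\tau$-orbit of any element of $L_i$ has length $\phi(p^{k-i})$, filling $L_i$ entirely. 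So each $L_i$ is a single $\tau$-cycle, and together with the fixed point $\{0\}$ this yields $\cyc(\tau) = k+1$, giving $t([\pi]) = n - k - 1$, as required.

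I do not expect a serious obstacle: the two technical inputs --- that a primitive root modulo $p^k$ stays a primitive root modulo $p^j$ for every $j \le k$, and that $a \not\equiv 1 \pmod p$ (needed uniformly in $b$ for the conjugation step) --- are both immediate for odd $p$. The real leverage of the argument is that a single well-chosen $\pi$ simultaneously controls all $n$ members of the coset, reducing the whole family $\{\sigma_b\}$ to one linear map up to conjugation.
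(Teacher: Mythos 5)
Your proposal is correct and follows essentially the same route as the paper: take $\pi(x)=ax$ for a primitive root $a$ modulo $p^k$, use $\gcd(a-1,n)=1$ to conjugate every coset element $x\mapsto ax+b$ to the linear map via its unique fixed point, and count $k+1$ cycles from the $p$-adic filtration of $\ZZ_{p^k}$. The only (cosmetic) difference is that you justify that $a$ remains a primitive root modulo each $p^{k-i}$ by the surjectivity of $\ZZ_{p^k}^\times\to\ZZ_{p^{k-i}}^\times$, whereas the paper invokes a specific simultaneous generator from Cohen's book; your observation is slightly more economical but yields the same argument.
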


\begin{proof}
For $a\in \ZZ_n$ 
define a map $\pi_{n,a}:\ZZ_n \to \ZZ_n$ by
\[
    \pi_{n,a}(i) \equiv a \cdot i \pmod n
    \qquad (\forall\, i \in \ZZ_n). 
\]
Let $\ZZ_n^\times = \{a \in \ZZ_n \,:\, \gcd(a,n) = 1\}$ be the group of units of the ring $\ZZ_n$.
If $a \in \ZZ_n^\times$  then $\pi_{n,a}$ is a bijection; identifying $[n]$ with $\ZZ_n$, we can write $\pi_{n,a} \in S_n$. 
Moreover, if $\sigma \in \pi_{n,a} C_n$ then, for a suitable $j \in \ZZ_n$,
\[
    \sigma(i) \equiv a \cdot (i + j) \pmod n
    \qquad (\forall\, i \in \ZZ_n). 
\]
We want each $\sigma$ to have a fixed point. Thus we want to have, for every $j \in \ZZ_n$, a solution $i$ to the congruence
\[
    i \equiv a \cdot (i + j) \pmod n,
\]
namely to
\[
    (1-a) \cdot i \equiv a \cdot j \pmod n.
\]
This has a solution for every $j \in \ZZ_n$ (in particular, for $j = a^{-1}$) if and only if $a-1 \in \ZZ_n^\times$. 
Assuming, indeed, that $a,a-1 \in \ZZ_n^\times$, let 
\[
    i_\sigma \equiv (1-a)^{-1} \cdot a \cdot j \pmod n
\]
be the (unique) fixed point of $\sigma$.
Then
\[
    \sigma(i) - i_\sigma 
    = \sigma(i) - \sigma(i_\sigma) 
    = a \cdot (i + j) - a \cdot (i_\sigma + j)
    = a \cdot (i  - i_\sigma)
    \qquad (\forall\, i \in \ZZ_n).
\]
Thus $\sigma$ is conjugate (in $S_n$) to $\pi_{n,a}$, by the permutation corresponding to a cyclic shift by $i_\sigma$, and therefore $\sigma$ and $\pi_{n,a}$ have the same number of cycles:
\[
    \cyc(\sigma) = \cyc(\pi_{n,a})
    \qquad (\forall \sigma \in \pi_{n,a}\ZZ_n).
\]
Now let $p$ be an odd prime and $k$ a positive integer.
The following facts regarding generators of the cyclic group $\ZZ_{p^k}^\times$ are part of the remarks following~\cite[Lemma 1.4.5]{Cohen}.
Let $1 < g_0 < p$ be a generator of the cyclic group $\ZZ_p^\times$, and define
\[
    g := 
    \begin{cases}
        g_0, &\text{if } g_0^{p-1} \not\equiv 1 \pmod {p^2}; \\
        g_0 + p, &\text{otherwise.}
    \end{cases}
\]
Then $g$ is a generator of $\ZZ_{p^k}^\times$, simultaneously for all $k \ge 1$.
The additive group $\ZZ_{p^k}$ is a disjoint union of its subsets $C_0, \ldots, C_k$, where $C_i = p^i \ZZ_{p^{k-i}}^\times$ $(0 \le i \le k-1)$ and $C_k = \{0\}$. 
For the above choice of a generator $g$, each of these sets forms a single cycle of $\pi_{p^k,g}$, thus $\cyc(\pi_{p^k,g}) = k+1$.
Note also that $\gcd(g-1,p) = 1$, thus $g-1 \in \ZZ_{p^k}^\times$ for any $k \ge 1$.
It follows that
\[
    \cyc(\sigma) = \cyc(\pi_{p^k,g}) = k+1
    \qquad (\forall \sigma \in \pi_{p^k,g}\ZZ_{p^k})
\]
and therefore
\[
    t(p^k)
    = \max_{\pi \in S_{p^k}} \min_{\sigma \in \pi\ZZ_{p^k}} (p^k - \cyc(\sigma))
    \ge \min_{\sigma \in \pi_{p^k,g}\ZZ_{p^k}} (p^k - \cyc(\sigma)) 
    = p^k - (k+1). \qedhere
\]
\end{proof}

\begin{corollary}\label{cor:t_prime}
        If $n$ is prime then
    \[
        t(n) = n-2.
    \]
\end{corollary}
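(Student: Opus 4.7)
The plan is to combine the general upper bound from Observation~\ref{obs:t0} with the lower bound from Proposition~\ref{t:prime_power}, specialized to the case $k=1$. Observation~\ref{obs:t0} gives $t(n) \le n-2$ for every $n \ge 2$, so only the matching lower bound needs to be established.

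For $n$ an odd prime, I would apply Proposition~\ref{t:prime_power} with $p = n$ and $k = 1$, which yields
\[
    t(n) \ge n - k - 1 = n - 2.
\]
Combined with Observation~\ref{obs:t0}, this gives the claimed equality. The remaining case is $n = 2$, which is not covered by Proposition~\ref{t:prime_power} since the latter requires $p$ to be odd. Here the claim is verified directly: there are $(2-1)! = 1$ cyclic permutations of $[2]$, so trivially $t(2) = 0 = n - 2$.

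There is essentially no obstacle here: the corollary is an immediate consequence of the two preceding results, once one observes that for $n = p$ prime the quantity $n - \log_p n - 1$ appearing in Proposition~\ref{t:prime_power} equals $n - 2$, matching the upper bound of Observation~\ref{obs:t0}. The only care required is to handle the boundary case $n = 2$ separately.
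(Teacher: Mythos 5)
Your proposal is correct and follows exactly the paper's own argument: the upper bound comes from Observation~\ref{obs:t0}, the matching lower bound from Proposition~\ref{t:prime_power} with $k=1$, and the case $n=2$ is checked directly. No further comment is needed.
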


\begin{proof}
This clearly holds for $n = 2$.
By Observation~\ref{obs:t0} and Proposition~\ref{t:prime_power}, it also holds if $n$ is an odd prime.
\end{proof}

\begin{conjecture}\label{conj:prime}
    $t(n) = n-2$ if and only if $n$ is prime.
\end{conjecture}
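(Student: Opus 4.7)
We outline a plan toward Conjecture~\ref{conj:prime}. The forward direction is Corollary~\ref{cor:t_prime}, so the content lies in the converse: for composite $n$ we want $t(n) \le n - 3$, i.e., for every $\pi \in S_n$ some shift $\sigma_j := \pi c^j$ has $\cyc(\sigma_j) \ge 3$. The plan is a tiered reduction that uses low-order cycle counts to eliminate all but a rigid final case.

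Set $d(i) := \pi^{-1}(i) - i \pmod n$. Since $i$ is a fixed point of $\sigma_j$ exactly when $d(i) = j$, one has $\sum_{j \in \ZZ_n} F_1(\sigma_j) = n$, where $F_1$ counts fixed points. If some fibre of $d$ has size $\ge 2$ then the corresponding $\sigma_j$ has at least two fixed points, hence (for $n \ge 3$) at least three cycles, and we are done. We may therefore assume $d$ is a bijection of $\ZZ_n$; equivalently $\pi^{-1}$ is a \emph{complete mapping} of $\ZZ_n$. But then $\sum_i d(i) = \sum_i i = n(n-1)/2$, while also $\sum_i d(i) = \sum_i \pi^{-1}(i) - \sum_i i = 0$, so $n(n-1)/2 \equiv 0 \pmod n$, forcing $n$ odd. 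Every even composite $n$ is thus handled.

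For $n$ odd composite we iterate the argument one level up. Set $g(i) := \pi^{-1}(i) + i \pmod n$ and $m_k := |g^{-1}(k)|$. A direct pairing shows that $\{i, i'\}$ is a $2$-cycle of $\sigma_j$ iff $g(i) = g(i')$ (with $j$ then uniquely determined), giving $\sum_j F_2(\sigma_j) = \sum_k \binom{m_k}{2}$. If $g$ is not injective, then for some $j$ we have $F_1(\sigma_j) = 1$ and $F_2(\sigma_j) \ge 1$ simultaneously, which for $n \ge 5$ forces $\cyc(\sigma_j) \ge 3$. This reduces us to the situation in which $d$ and $g$ are \emph{both} bijections of $\ZZ_n$ (that is, $\pi^{-1}$ is a \emph{strong} complete mapping), and every $\sigma_j$ has cycle type exactly $(1, n-1)$.

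The main obstacle is this final case. The rigidity is considerable: since $\sigma_j^{n-1} = e$ for every $j \in \ZZ_n$, a direct expansion yields
\[
    \pi \cdot \pi'_j \cdot \pi'_{2j} \cdots \pi'_{(n-2)j} = c^j
    \qquad (\forall j \in \ZZ_n),
\]
where $\pi'_k := c^k \pi c^{-k}$ is given by $\pi'_k(i) = \pi(i - k) + k$. The plan is to fix a prime divisor $p$ of $n$ and exploit the quotient $\ZZ_n / (n/p) \ZZ_n \cong \ZZ_p$: analysing the induced action on the $p$ cosets should conflict with the $(1, n-1)$ cycle-type constraint. A complementary route is to push the counting further by bounding $\sum_j F_k(\sigma_j)$ for $k \ge 3$ and aiming at $\sum_j \cyc(\sigma_j) > 2n$ for every odd composite $n$. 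Since $\pi_{n,g}$ from Proposition~\ref{t:prime_power} saturates the bound at prime $n$, any successful argument here must use compositeness in an essential way.
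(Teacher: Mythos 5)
First, a point of comparison that matters here: the paper does \emph{not} prove this statement. It appears only as Conjecture~\ref{conj:prime}, verified by computer for $n \le 11$, with the forward direction supplied by Corollary~\ref{cor:t_prime} and the converse reduced (via the unnumbered Lemma before Conjecture~\ref{conj:one_fixed_point}) to the still-open Conjecture~\ref{conj:one_fixed_point}. So there is no proof of the authors' to measure yours against, and your proposal is, by your own account, a plan rather than a proof. That said, the parts you do carry out are correct and go strictly beyond what the paper establishes. The fixed-point count $\sum_j F_1(\pi c^j) = n$ and the observation that a repeated value of $d(i) = \pi^{-1}(i) - i$ forces some shift to have at least three cycles is exactly the mechanism of the paper's Lemma. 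Your two additions are sound: the parity computation $\sum_i d(i) \equiv 0 \pmod n$ versus $\sum_i d(i) \equiv n(n-1)/2 \pmod n$ rules out $d$ being a bijection for even $n$ (the classical nonexistence of complete mappings of $\ZZ_n$ for even $n$), which settles every even composite $n$; and the two-cycle bookkeeping via $g(i) = \pi^{-1}(i) + i$ correctly reduces the odd composite case to permutations for which $d$ and $g$ are both bijections and every shift has cycle type $(1,n-1)$. (Known nonexistence results for strong complete mappings of $\ZZ_n$ when $3 \mid n$ would shrink the residual case further, to composite $n$ with $\gcd(n,6)=1$, e.g.\ $n=25$.)

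The gap is the final case, and it is genuine: it is, up to the extra information that $g$ is a bijection, precisely Conjecture~\ref{conj:one_fixed_point}, which the authors also leave open. Neither of your proposed routes is executed. The identity $\pi\,\pi'_j\,\pi'_{2j}\cdots\pi'_{(n-2)j} = c^{j}$ is a correct consequence of $\sigma_j^{n-1}=e$, but it is only a necessary condition and you extract nothing from it; the suggested passage to $\ZZ_n/(n/p)\ZZ_n$ is not accompanied by any argument that the $(1,n-1)$ cycle type induces a coherent (let alone contradictory) action on the $p$ cosets, and an $(n-1)$-cycle need not respect that congruence structure at all. The alternative route of bounding $\sum_j F_k(\sigma_j)$ for $k \ge 3$ faces the same difficulty you already navigated for $k=2$: a first-moment count only produces \emph{some} short cycle for \emph{some} $j$, whereas you need three cycles in a \emph{single} $\sigma_j$, and as you note yourself the extremal examples $\pi_{n,g}$ of Proposition~\ref{t:prime_power} show that no argument ignoring compositeness can succeed. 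In short: your reductions are correct and constitute real partial progress (the even case is fully settled), but the conjecture remains unproved by your proposal, just as it is in the paper.
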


\begin{lemma}
    Assume that $n > 2$ and that $\pi \in S_n$ satisfies 
    \[
        \max_{\sigma \in \pi C_n} \cyc(\sigma) = 2.
    \]
    Then each $\sigma \in \pi C_n$ has cycle structure $(n-1,1)$.
\end{lemma}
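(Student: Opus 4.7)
The plan is to carry out a double-counting argument on fixed points over the $n$ elements of the coset $\pi C_n$.

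First, I will establish a per-element bound: under the hypothesis $n > 2$, every $\sigma \in \pi C_n$ has at most one fixed point. Two distinct fixed points of $\sigma$ would supply two $1$-cycles, and combined with $\cyc(\sigma) \le 2$ these would have to account for all cycles of $\sigma$, forcing $n = 2$ and contradicting the hypothesis.

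Next, I will compute the total number of fixed points summed over the coset $\{\pi c^j : 0 \le j \le n-1\}$. A direct calculation, using $(\pi c^j)(i) = \pi(c^j(i))$, shows that $i \in [n]$ is a fixed point of $\pi c^j$ if and only if $c^j(i) = \pi^{-1}(i)$, a condition which determines the exponent $j$ uniquely modulo $n$. Hence each $i \in [n]$ is fixed by exactly one member of the coset, so the total count of fixed points summed over the coset is exactly $n$.

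Combining the two steps: $|\pi C_n| = |C_n| = n$, each of these $n$ elements has at most one fixed point, and the total equals $n$; equality forces every $\sigma \in \pi C_n$ to have exactly one fixed point. Any permutation of $[n]$ with exactly one fixed point and at most two cycles must have cycle structure $(n-1,1)$: the fixed point accounts for one cycle, and the remaining $n-1$ elements have to form a single additional cycle (a lone $n$-cycle would have no fixed points, and a third cycle is forbidden). This is the desired conclusion. The argument has no real obstacle; the only subtlety is to rule out the degenerate case $n = 2$ in the first step, which is precisely what the hypothesis $n > 2$ achieves.
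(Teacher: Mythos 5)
Your proposal is correct and matches the paper's argument: the paper likewise observes that each $i \in [n]$ is fixed by exactly one coset element, that no element can have two fixed points when $n>2$, and concludes by the resulting bijection that every $\sigma \in \pi C_n$ has exactly one fixed point and hence cycle structure $(n-1,1)$. Your double-counting phrasing is just a restatement of the paper's injective-hence-bijective map $f\colon [n]\to \pi C_n$.
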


\begin{proof}
    Clearly, each $j \in [n]$ is a fixed point of a unique cyclic shift $\sigma \in \pi C_n$. This defines a mapping $f: [n] \to \pi C_n$. If $\cyc(\sigma) = 2$ then $\sigma$ cannot have more than one fixed point (since $n > 2$). It follows that $f$ is injective, thus bijective, and each $\sigma \in \pi C_n$ has a unique fixed point. Its other cycle is, of course, of length $n-1$.
\end{proof}

\begin{conjecture}\label{conj:one_fixed_point}
    If $n > 2$ and $\pi \in S_n$ satisfies 
    \[
        \max_{\sigma \in \pi C_n} \cyc(\sigma) = 2,
    \]
    then $n$ is prime and 
    \[
        \pi(i) \equiv a \cdot i \pmod n
        \qquad (\forall i)
    \]
    for some $a \in \ZZ_n^\times$.
\end{conjecture}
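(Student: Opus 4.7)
\emph{Reduction to $n$ odd.} Identifying $[n]$ with $\ZZ_n$ and $c$ with the translation $i \mapsto i+1 \pmod n$, the preceding lemma tells us that for each $j \in \ZZ_n$ there is a unique $\sigma_j := \pi c^{k_j} \in \pi C_n$ with $\sigma_j(j) = j$, so $k_j = \pi^{-1}(j) - j$, and the map $j \mapsto k_j$ is a bijection $\ZZ_n \to \ZZ_n$. Equivalently, $\pi^{-1}$ is a complete mapping (orthomorphism) of $\ZZ_n$. Evaluating $\sum_{j \in \ZZ_n} k_j$ modulo $n$ in two ways---directly as $\sum_j(\pi^{-1}(j) - j) \equiv 0$, and via the bijectivity of $k$ as $\sum_j j \equiv n(n-1)/2$---yields $n(n-1)/2 \equiv 0 \pmod n$, which forces $n$ to be odd.

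\emph{Group-theoretic structure.} Each $\sigma_j$ is a product of a fixed point $j$ and an $(n-1)$-cycle on its complement, so it has order exactly $n-1$ and every nontrivial power $\sigma_j^m$ ($1 \le m \le n-2$) fixes only $j$. Let $H := \langle \sigma_j : j \in \ZZ_n \rangle \le S_n$. For $j \ne j'$,
\[
    \sigma_j \sigma_{j'}^{-1} = \pi c^{k_j - k_{j'}} \pi^{-1}
\]
is a nontrivial conjugate of a power of $c$, and as $j, j'$ vary these products run over all nonidentity elements of the cyclic subgroup $K := \pi C_n \pi^{-1}$ of $H$, of order $n$. A direct check shows that $K$ acts regularly on $\ZZ_n$: $\pi c^m \pi^{-1}$ fixes $i$ only when $m = 0$. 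Since $\langle \sigma_j \rangle$ fixes $j$ while every nontrivial element of $K$ is fixed-point-free, $K \cap \langle \sigma_j \rangle = \{e\}$, and hence $|H| \ge n(n-1)$.

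\emph{Conclusion via Frobenius structure.} The plan is then to show that $H$ is a Frobenius group of order exactly $n(n-1)$, with Frobenius kernel $K$ and complement $\langle \sigma_j \rangle = \mathrm{Stab}_H(j)$. Granted this, the cyclic complement of order $n-1$ acts faithfully by conjugation on the cyclic kernel $K \cong \ZZ_n$, yielding an embedding $\ZZ_{n-1} \hookrightarrow \mathrm{Aut}(K) \cong \ZZ_n^\times$. This forces $n - 1 \le \varphi(n)$, hence equality, so $n$ must be prime. Then $H \cong \mathrm{AGL}(1,n) = \ZZ_n \rtimes \ZZ_n^\times$ and $\pi \in H$ is affine: $\pi(i) \equiv a \cdot i + b \pmod n$ for some $a \in \ZZ_n^\times$, $b \in \ZZ_n$. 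Passing to the representative $\pi c^{-a^{-1}b} \in \pi C_n$ gives $\pi(i) \equiv a \cdot i \pmod n$, matching the conjectured form.

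\emph{Main obstacle.} The hard part is proving that $H$ really is Frobenius, i.e.\ that no non-identity element of $H$ fixes more than one point of $\ZZ_n$. The hypothesis directly controls only the single coset $\pi C_n$, whereas $H$ contains many additional products. A natural line of attack is to extract further information by analysing, for each $2 \le \ell \le n-2$, the absence of an $\ell$-cycle in every $\pi c^k$: along such a cycle $(i_0, \ldots, i_{\ell-1})$ of $\pi c^k$ one has $\sum_m \rho(i_m) = \ell k$, where $\rho(j) := \pi^{-1}(j) - j$, so forbidding these cycles imposes a cascade of ``higher-order complete mapping'' conditions on $\rho$, which one would hope to leverage into affineness of $\pi$. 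Making this precise, or finding an alternative route (for instance, via character computations on the permutation action of $H$ on $\ZZ_n$), appears to be the main technical hurdle.
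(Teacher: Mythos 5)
Be aware first that this statement appears in the paper only as a conjecture, verified computationally for $n\le 11$; the paper gives no proof, so there is nothing to compare your argument against, and a complete argument would be a new result. Your write-up is, by your own admission, not a proof: everything hinges on the unproved claim that $H=\langle \sigma_j : j\in\ZZ_n\rangle$ is a Frobenius group with kernel $K=\pi C_n\pi^{-1}$ and complement $\langle\sigma_j\rangle$. That claim is essentially the whole difficulty. The hypothesis $\max_{\sigma\in\pi C_n}\cyc(\sigma)=2$ controls the fixed points of the $n$ elements of a single coset, whereas the Frobenius property requires that no nonidentity element of the entire generated group $H$ --- which contains arbitrary words in the $\sigma_j$, a set of size at least $n(n-1)$ and a priori much larger --- fixes two points. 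Nothing in the setup propagates the one-fixed-point condition from the coset to these products; indeed, if $H$ were known to be sharply $2$-transitive the conjecture would follow quickly (normality of $K$ in $H$ gives $\pi c\pi^{-1}=c^a$, hence $\pi(i+1)=\pi(i)+a$ and $\pi$ is affine, and primality of $n$ follows from $n-1\le\varphi(n)$), so your reduction replaces the conjecture by a statement that is at least as strong. The ``higher-order complete mapping'' conditions you mention in the last paragraph are a reasonable direction, but as written they are a research program, not a proof.

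That said, the parts you do establish are correct and constitute genuine partial progress worth recording. The lemma preceding the conjecture gives that $j\mapsto k_j=\pi^{-1}(j)-j$ is a bijection of $\ZZ_n$, i.e.\ $\pi^{-1}$ is a complete mapping of $\ZZ_n$; summing over $j$ gives $0\equiv\sum_j k_j\equiv n(n-1)/2\pmod n$, which fails for even $n>2$. So the hypothesis of the conjecture can never hold for even $n>2$, and the conjecture is (vacuously) true in that case. The computations of $\sigma_j\sigma_{j'}^{-1}$, the regularity of $K$, the trivial intersection $K\cap\langle\sigma_j\rangle$, and the bound $|H|\ge n(n-1)$ are also all correct. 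One smaller point to watch if you pursue this: even granting the Frobenius structure, you write ``$\pi\in H$ is affine,'' but it is not clear that $\pi$ itself lies in $H$ (only the $\sigma_j=\pi c^{k_j}$ do, and $c\notin H$ a priori); you should instead argue via the normality of $K$, as sketched above, or work with the representative $\sigma_j$ of the coset $\pi C_n$.
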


Conjecture~\ref{conj:one_fixed_point} implies Conjecture~\ref{conj:prime}. Both have been verified for $n \le 11$.


By  similar arguments one can prove the following statements.     

\begin{prop}
    If $n = 2p^k$, where $p$ is an odd prime and $k\ge 1$, then 
    \[
        t(n) \ge n- 2k - 2. 
    \]
\end{prop}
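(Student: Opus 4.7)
The plan is to mimic the proof of Proposition~\ref{t:prime_power}. The group $\ZZ_n^\times = \ZZ_{2p^k}^\times$ is cyclic of order $\phi(p^k) = p^{k-1}(p-1)$, so it has a generator $g$, which is necessarily odd since $g$ is coprime to $n$. The main departure from the prime-power case is that $g-1$ is no longer a unit modulo $n$; as a consequence, not every $\sigma \in \pi_{n,g} C_n$ will be conjugate to $\pi_{n,g}$, and cycle counts of coset representatives will split according to the parity of the cyclic-shift index.

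First I would compute $\cyc(\pi_{n,g}) = 2k+2$. Since $g$ is odd, multiplication by $g$ preserves parity, splitting $\ZZ_n$ into an even class $E$ and an odd class $O$, each of size $p^k$. On each parity class, the orbits of $\langle g \rangle$ are indexed by the $p$-adic valuation $v_p(i) \in \{0, 1, \ldots, k\}$, by the same argument used in Proposition~\ref{t:prime_power} (using that the reduction of $g$ modulo $p^{k-j}$ remains a generator of $\ZZ_{p^{k-j}}^\times$). This yields $k+1$ orbits per parity class, and hence $\cyc(\pi_{n,g}) = 2(k+1)$.

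Next I would bound $\cyc(\sigma)$ for every $\sigma(i) = g(i+j) \pmod n$ in the coset $\pi_{n,g} C_n$, splitting on the parity of $j$. If $j$ is even, then $gj$ is even, and since $\gcd(g-1, n) = 2$ divides $gj$, the fixed-point equation $(1-g)i \equiv gj \pmod n$ is solvable; conjugation by the cyclic shift centered at such a fixed point identifies $\sigma$ with $\pi_{n,g}$, giving $\cyc(\sigma) = 2k+2$. If $j$ is odd, then $\sigma$ interchanges $E$ and $O$, so every cycle of $\sigma$ has even length, and each cycle of length $2r$ restricts to a single $\sigma^2$-orbit of length $r$ on $E$; hence $\cyc(\sigma) = \cyc(\sigma^2|_E)$. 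Identifying $E$ with $\ZZ_{p^k}$ via $i = 2m$, the restriction $\sigma^2|_E$ becomes the affine map $m \mapsto g^2 m + c$ on $\ZZ_{p^k}$ with $c = gj(g+1)/2$. A $p$-adic valuation check, $v_p(c) = v_p(j) + v_p(g+1) \ge v_p(g+1) = v_p(g^2-1)$, shows that $(1-g^2) m \equiv c \pmod{p^k}$ is solvable; translating by such a fixed point conjugates $\sigma^2|_E$ to multiplication by $g^2$ on $\ZZ_{p^k}$. Since $\langle g^2 \rangle$ has index $2$ in $\ZZ_{p^k}^\times$, its orbits on $\ZZ_{p^k}$ number $2k+1$ (the singleton $\{0\}$, together with two orbits in each $p^j \ZZ_{p^{k-j}}^\times$ for $0 \le j \le k-1$), so $\cyc(\sigma) = 2k+1$.

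Combining both parity cases, $\max_{\sigma \in \pi_{n,g} C_n} \cyc(\sigma) = 2k+2$, whence $t([\pi_{n,g}]) = n - (2k+2)$ and therefore $t(n) \ge n - 2k - 2$. The main obstacle I anticipate is the odd-$j$ analysis, specifically guaranteeing a fixed point of the affine map $m \mapsto g^2 m + c$ even in the degenerate situation $g \equiv -1 \pmod p$ (which is forced at $p = 3$, where $-1$ is the only generator of $\ZZ_3^\times$); the $p$-adic identity above handles this uniformly and is the one genuinely new ingredient beyond the prime-power argument.
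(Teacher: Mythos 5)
Your proposal is correct and follows essentially the same route as the paper: the witness $\pi_{n,g}$ for an odd generator $g$ of $\ZZ_{2p^k}^\times$, a fixed-point/conjugation argument showing $\cyc(\sigma_j)=2k+2$ for even shifts $j$, and passage to $\sigma_j^2$ for odd $j$ to get $2k+1$. The only (minor) variation is in the odd-$j$ bookkeeping: you count $\langle g^2\rangle$-orbits on $\ZZ_{p^k}$ directly after restricting $\sigma_j^2$ to the even class, whereas the paper uses the identities $\cyc(\sigma_j^2)=2\cyc(\sigma_j)$ and $\cyc(\sigma_0^2)=2\cyc(\sigma_0)-2$ together with conjugacy of $\sigma_j^2$ and $\sigma_0^2$; both yield the same count.
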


\begin{proof}
    Let $g$ be a simultaneous generator for $\ZZ_{p^t}^\times$ for all $t \ge 1$, as in the proof of Proposition~\ref{t:prime_power},
    and fix an integer $k \ge 1$. 
    Then either $g$ or $g + p^k$ (whichever is odd) is a generator of $\ZZ_{2p^k}^\times$~\cite[p. 26]{Cohen}. 
    It follows that there exists a simultaneous odd generator $g$ 
    for all $\ZZ_{2p^j}^\times$, $1 \le j \le k$.  
    Using the notation $\pi_{n,a}$ from the proof of Proposition~\ref{t:prime_power}, it is easy to see that $\cyc(\pi_{2p^k,g}) = 2k+2$, with cycles corresponding to the subsets
    $C_i^{\odd}= p^i \ZZ_{2p^{k-i}}^\times$ and $C_i^{\even}= 2 p^i \ZZ_{2p^{k-i}}^\times$, $0\le i\le k$.
    
    Consider now the shifts of $\pi_{2p^k,g}$, of the form $\sigma_j := \pi_{2p^k,g} c^j$ where $c = (1,2,\ldots,2p^k)$.
    We would like to have a fixed point for $\sigma_j$ for each value of $j$, since then $\sigma_j$ is conjugate to $\sigma_0 = \pi_{2p^k,g}$;  
    but this is impossible, since one of the two consecutive integers $g-1$ and $g$ must be even.
    We therefore only claim that $\sigma_j$ has a fixed point for {\em even} values of $j$.

    Indeed, a fixed point $i$ for $\sigma_{j}$ ($j$ even) must satisfy
    \[
        (1-g) \cdot i \equiv g \cdot j 
        \pmod {2p^k},
    \]
    or equivalently, since both $j$ and $g-1$ are even,
    \[
        \frac{1-g}{2} \cdot i \equiv g \cdot \frac{j}{2} 
        \pmod {p^k},
    \]
    This has a solution for each even $j$ if and only if $(g-1)/2$ is invertible in $\ZZ_{p^k}$, which is indeed the case (since otherwise $g \equiv 1 \pmod p$, contradicting the fact that $g$ generates $\ZZ_p^\times$). 
    Thus indeed $\sigma_j$ has a fixed point, and is thus conjugate to $\sigma_0$, implying that
    \[
        \cyc(\pi_{2p^k,g} c^j) = \cyc(\pi_{2p^k,g}) = 2k+2 
        \qquad (\forall \text{ even } j).
    \]
    
    Now consider odd values of $j$. Since
    \[
        \sigma_j(i) \equiv g \cdot (i + j) \pmod {2p^k},
    \]
    and $g$ is odd, the number $\sigma_j(i)$ is odd if and only if $i$ is even. It follows that each cycle of $\sigma_j$ alternates between even and odd numbers, and therefore has even length. It follows that each cycle of $\sigma_j$ splits into two cycles of $\sigma_j^2$, thus 
    \[
        \cyc(\sigma_j^2) = 2 \cdot \cyc(\sigma_j)
        \qquad (\forall \text{ odd } j).
    \]
    On the other hand, we claim that, for any value of $j$, $\sigma_j^2$ has a fixed point and is thus conjugate to $\sigma_0^2$. 
    Indeed,
    \[
        \sigma_0^2(i) \equiv g^2 \cdot i
        \pmod {2p^k}
        \qquad (\forall i)
    \]
    while
    \[
        \sigma_j^2(i) \equiv g^2 \cdot i + (g^2 + g) \cdot j
        \pmod {2p^k}
        \qquad (\forall i).
    \]
    In order to have $\sigma_j^2(i) = i$ we need
    \[
        (1 - g^2) \cdot i \equiv (g^2 + g) \cdot j
        \pmod {2p^k}    
    \]
    or, equivalently,
    \[
        \frac{1+g}{2} \cdot (1 - g) \cdot i \equiv \frac{1+g}{2} \cdot g \cdot j
        \pmod {p^k}.    
    \]
    To this end, it is sufficient (though not necessary!) to have
    \[
        (1 - g) \cdot i \equiv g \cdot j
        \pmod {p^k}.  
    \]
    As noted above, this equation has a solution $i$ for each value of $j$, since $g-1 \in \ZZ_{p^k}^\times$. The (now established) existence of a fixed point $i_0$ for $\sigma_j^2$ implies that
    \[
        \sigma_j^2(i) - i_0 
        \equiv \sigma_j^2(i) - \sigma_j^2(i_0) 
        \equiv g^2 \cdot (i - i_0)
        \equiv \sigma_0^2(i - i_0)
        \pmod {2p^k},
    \]
    so that $\sigma_j^2 = c^{i_0} \sigma_0^2 c^{-i_0}$ is conjugate to $\sigma_0^2$ in $S_{2p^k}$. We therefore have
    \[
        \cyc(\sigma_j^2) = \cyc(\sigma_0^2)
        \qquad (\forall j).
    \]
    Finally, the above description of the cycles of $\sigma_0 = \pi_{2p^k,g}$ shows that all its cycle lengths are divisible by $p-1$, hence even, except for the two fixed points. It follows that
    \[
        \cyc(\sigma_0^2) = 2 \cdot \cyc(\sigma_0) - 2.
    \]
    Putting it all together, we have
        \[
        \cyc(\sigma_j) = \cyc(\sigma_0) - 1
        \qquad (\forall \text{ odd } j),
    \]
    namely
    \[
        \cyc(\pi_{2p^k,g} c^j) = \cyc(\pi_{2p^k,g}) - 1 = 2k+1 
        \qquad (\forall \text{ odd } j).
    \]
    Thus
    \begin{align*}
        t(2p^k)
        &= \max_{\pi \in S_{2p^k}} \min_{\sigma \in \pi\ZZ_{2p^k}} (2p^k - \cyc(\sigma))
        \ge \min_{\sigma \in \pi_{2p^k,g}\ZZ_{2p^k}} (2p^k - \cyc(\sigma)) \\
        &= \min(2p^k - (2k+2), 2p^k - (2k+1))
        = 2p^k- (2k+2). \qedhere
    \end{align*}
\end{proof}

\begin{prop}
\label{p38}
    If $n = 2^k$ and $k\ge 2$ then  
    \[
        t(n) \ge n- 2k + 1. 
    \]
\end{prop}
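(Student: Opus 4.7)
The plan is to imitate Proposition~\ref{t:prime_power} and the preceding $n = 2p^k$ proposition by exhibiting a specific cyclic permutation whose coset has a controllably bounded maximum cycle count. Explicitly, I would take $\pi := \pi_{2^k, 3}$, so that $\pi(i) \equiv 3i \pmod{2^k}$, and study the shifts $\sigma_j(i) \equiv 3(i + j) \pmod{2^k}$ as $j$ ranges over $\ZZ_{2^k}$. The target is to establish
\[
    \max_{j \in \ZZ_{2^k}} \cyc(\sigma_j) = 2k - 1,
\]
which together with the formula preceding Observation~\ref{obs:t0} will yield $t([\pi]) = 2^k - (2k - 1) = n - 2k + 1$, proving the claimed lower bound.

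First I would compute $\cyc(\pi)$ using the same invariant decomposition $\ZZ_{2^k} = C_0 \cup C_1 \cup \cdots \cup C_k$ with $C_i = 2^i \ZZ_{2^{k-i}}^\times$ for $i < k$ and $C_k = \{0\}$ as in the proof of Proposition~\ref{t:prime_power}. Since the multiplicative order of $3$ in $\ZZ_{2^m}^\times$ is $2^{m-2}$ for $m \ge 3$, equals $2$ for $m = 2$, and equals $1$ for $m \le 1$, a direct orbit count will give two orbits on each $C_i$ with $0 \le i \le k - 3$, one orbit on $C_{k-2}$, and a single fixed point on each of $C_{k-1}$ and $C_k$, totalling $\cyc(\pi) = 2(k-2) + 3 = 2k - 1$ (with the degenerate cases $k = 2, 3$ giving the same value). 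Next, for even $j$ the fixed-point equation $2i \equiv -3j \pmod{2^k}$ is solvable, and the conjugation argument from the proof of Proposition~\ref{t:prime_power} will give $\cyc(\sigma_j) = \cyc(\pi) = 2k - 1$.

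The crux of the argument will be the odd-$j$ case, where $\sigma_j$ has no fixed point and is not manifestly conjugate to $\pi$. A direct iteration yields
\[
    \sigma_j^{2^\ell}(i) - i
    \equiv \frac{3^{2^\ell} - 1}{2}\,(2i + 3j) \pmod{2^k}.
\]
For odd $j$ the factor $2i + 3j$ is odd, so $\sigma_j^{2^\ell}(i) = i$ reduces to the condition $2^{k+1} \mid 3^{2^\ell} - 1$. I would then invoke the classical $2$-adic valuation identity $v_2(3^{2^\ell} - 1) = \ell + 2$ for $\ell \ge 1$ to pin down the smallest admissible $\ell$ as $k - 1$, uniformly in $i$; hence every $\sigma_j$-orbit has length exactly $2^{k-1}$, giving $\cyc(\sigma_j) = 2$. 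Since $k \ge 2$ implies $2k - 1 \ge 3 > 2$, the maximum of $\cyc(\sigma_j)$ is realized by even $j$ and equals $2k - 1$, as required.

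The main obstacle I anticipate is this odd-$j$ analysis. Unlike in Proposition~\ref{t:prime_power}, no single conjugation reduces $\sigma_j$ to $\pi$, so the orbit lengths of a genuinely different permutation must be controlled directly; the $2$-adic valuation identity for $3^{2^\ell} - 1$ is the key input that makes the orbit-length calculation uniform in $i$ and sidesteps any case-by-case inspection of individual shifts.
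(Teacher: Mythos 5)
Your proposal is correct and follows essentially the same route as the paper: the same witness $\pi_{2^k,3}$, the same cycle count $2k-1$ via the sets $2^i\ZZ_{2^{k-i}}^\times$, the same fixed-point/conjugation argument for even shifts, and the same reduction of the odd-shift orbit lengths to the condition $2^{k+1}\mid 3^d-1$ (your $2$-adic valuation identity $v_2(3^{2^\ell}-1)=\ell+2$ is just a restatement of the paper's fact that $3$ has order $2^{k-1}$ in $\ZZ_{2^{k+1}}^\times$). The only cosmetic difference is that you treat $k=2$ uniformly while the paper verifies it separately.
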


\begin{proof} 
    First, for $k=2$ the lower bound holds, since $t(4) = 1 = 4 - 2 \cdot 2 + 1$. 

    Assume that $k \ge 3$.  
    It was shown by Gauss~\cite[Art.\ 90--91]{Gauss} that, in this case, $\ZZ_{2^k}^\times \cong \ZZ_2 \times \ZZ_{2^{k-2}}$, 
    where $3$ is a generator of the factor $\ZZ_{2^{k-2}}$ and $-1$ is a generator of the factor $\ZZ_2$; 
    see also~\cite[Ch.~6,\,\S 6]{Vinogradov}. 
    
    
    Note that $3$ also generates the cyclic group $\ZZ_4^\times$.
    Using the notation $\pi_{n,a}$ from the proof of Proposition~\ref{t:prime_power}, 
    one concludes that the cycles of $\pi_{2^k,3}$ are the subsets 
    $C_i^\varepsilon := \{\varepsilon \cdot 2^i 3^j \,:\, 0 \le j < 2^{k-2-i}\}$ $(0\le i \le k-3,\, \varepsilon \in \{1,-1\})$, 
    $C_{k-2} = \{2^{k-2}, -2^{k-2}\}$, 
    $C_{k-1} = \{2^{k-1}\}$
    and $C_{k} = \{0\}$.
    Hence $\cyc(\pi_{2^k,3}) = 2k-1$. 

    Regarding the shifts, by arguments as above, for every even $j$, 
    $\pi_{2^k,3}c^j$ and $\pi_{2^k,3}$ are conjugate, hence have the same number of cycles. 
    Finally, we claim that for every odd $j$, $\pi_{2^k,3}c^j$ has two cycles.     
    Indeed, for $d \ge 1$ and any $m$ we have 
    \[
        (\pi_{2^k,3}c^j)^d(m)
        \equiv m \cdot 3^d + 3j \cdot \sum_{i=0}^{d-1} 3^i
        \equiv m \pmod {2^k} 
    \]
    if and only if
    \[
        (2m + 3j) \cdot \frac{3^{d}-1}{2}
        \equiv 0 \pmod {2^k},
    \]
    which holds (for an odd $j$) if and only if 
    \[
        3^{d}-1 \equiv 0 \pmod {2^{k+1}}. 
    \]
    Recalling that the order of $3$ in $\ZZ_{2^{k+1}}^\times$ is $2^{k-1}$, this holds if and only if $2^{k-1}$ divides $d$, so that indeed $\pi_{2^k,3}c^j$ for odd $j$ has two cycles of length $2^{k-1}$ each. Overall, for $k \ge 3$,
    \begin{align*}
        t(2^k)
        &= \max_{\pi \in S_{2^k}} \min_{\sigma \in \pi\ZZ_{2^k}} (2^k - \cyc(\sigma))
        \ge \min_{\sigma \in \pi_{2^k,3}\ZZ_{2^k}} (2^k - \cyc(\sigma)) \\
        &= \min(2^k - (2k-1), 2^k - 2)
        = 2^k- (2k-1). \qedhere
    \end{align*}
\end{proof}

\begin{question}
    Is the function $t(n)$ monotone?
\end{question}

If the function $t(n)$ is monotone, one can apply the above bounds to other integers. 
Note, however, that adding a fixed point to $\pi\in S_n$ may decrease the value of $t$.  
For example, $t([\pi_{11,3}]) = 9$, while defining $\sigma \in S_{12}$ by $\sigma(i) = \pi_{11,3}(i)$ for $i\le 11$ and $\sigma(12)=12$ yields $t([\sigma])=8$. 


\subsection{General lower bound}

In this section we prove a general lower bound, which is independent of the prime decomposition of $n$.

\begin{theorem}\label{t:all_lower}
   For any $n>1$ 
   \[
        t(n) \ge n - \lceil e \cdot (\ln n + 1) \rceil.   
   \]
\end{theorem}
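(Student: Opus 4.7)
The plan is to use the probabilistic method. By the identity
\[
    t(n) = n - \min_{\pi \in S_n} \max_{0 \le j < n} \cyc(\pi c^j),
\]
which is immediate from the reformulation of $t(n)$ given just before Observation~\ref{obs:t0}, it suffices to exhibit a single permutation $\pi \in S_n$ such that every cyclic shift $\pi c^j$ has at most $K := \lceil e (\ln n + 1) \rceil$ cycles. I would choose $\pi$ uniformly at random from $S_n$. Since right multiplication by $c^j$ is a bijection of $S_n$, each of the $n$ permutations $\pi c^j$ (for $0 \le j < n$) is individually uniform on $S_n$, so a union bound gives
\[
    \Pr\Bigl[\, \max_{0 \le j < n} \cyc(\pi c^j) \ge K + 1 \Bigr] \le n \cdot \Pr_\sigma\bigl[\cyc(\sigma) \ge K + 1\bigr],
\]
where $\sigma$ is uniform on $S_n$; the task is to show the right-hand side is strictly less than $1$.

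To control the tail of $\cyc(\sigma)$, I would use the classical moment generating function identity
\[
    E\bigl[\lambda^{\cyc(\sigma)}\bigr]
    = \frac{\lambda (\lambda + 1) \cdots (\lambda + n - 1)}{n!}
    = \prod_{i=1}^{n} \Bigl( 1 + \frac{\lambda - 1}{i} \Bigr) \qquad (\lambda > 0),
\]
which follows either from the generating function of the unsigned Stirling numbers of the first kind or, equivalently, from Feller's coupling expressing $\cyc(\sigma)$ as a sum of independent Bernoulli variables with parameters $1/1, 1/2, \ldots, 1/n$. For $\lambda \ge 1$, the elementary bounds $\ln(1 + x) \le x$ and $H_n \le \ln n + 1$ yield
\[
    E\bigl[\lambda^{\cyc(\sigma)}\bigr] \le \exp\bigl((\lambda - 1)(\ln n + 1)\bigr),
\]
after which Markov's inequality gives $\Pr_\sigma[\cyc(\sigma) \ge K + 1] \le \lambda^{-(K+1)} \exp((\lambda - 1)(\ln n + 1))$.

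The final step is to optimize $\lambda$. Taking $\lambda = e$ reduces the Markov bound to $\exp(-(K+1) + (e - 1)(\ln n + 1))$, and this is precisely where the constant $e$ in the theorem emerges. For $K = \lceil e(\ln n + 1) \rceil \ge e(\ln n + 1)$ a direct calculation bounds the exponent by $-\ln n - 2$, so $\Pr_\sigma[\cyc(\sigma) \ge K + 1] \le e^{-2}/n$, and the union bound evaluates to at most $e^{-2} < 1$. Hence some realization of $\pi$ satisfies $\max_j \cyc(\pi c^j) \le K$, which gives $t(n) \ge n - K = n - \lceil e(\ln n + 1) \rceil$ as required. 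No single step is hard; the only structural feature worth emphasizing is the factorized MGF of the cycle count statistic, and the fact that $\lambda = e$ is the optimal Chernoff parameter here is what produces the constant $e$ in the theorem rather than something larger.
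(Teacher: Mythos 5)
Your proof is correct, and it follows the same overall strategy as the paper --- choose $\pi$ uniformly at random, observe that each shift $\pi c^j$ is individually uniform, and apply a union bound over the $n$ shifts --- but the key technical ingredient is different. The paper's route is an explicit enumeration (its Proposition~\ref{p31}): it counts permutations with exactly $k+1$ cycles via canonical cycle notation, bounds the count by $\frac{n!}{n\cdot k!}\,H_{n-1}^{\,k}$, and then sums the tail using Stirling's inequality $k! > (k/e)^k\sqrt{2\pi k}$ together with the geometric decay $p_{k+1}\le p_k/e$ for $k\ge k_0$. You instead invoke the classical identity $E[\lambda^{\cyc(\sigma)}]=\lambda(\lambda+1)\cdots(\lambda+n-1)/n!=\prod_{i=1}^n\bigl(1+\tfrac{\lambda-1}{i}\bigr)$ and run a Chernoff argument with the optimal parameter $\lambda=e$; the exponent computation $-(K+1)+(e-1)(\ln n+1)\le -\ln n-2$ recovers the same crucial $1/n$ factor that the paper gets from its explicit count, so the union bound closes in both cases. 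Your version is shorter and makes transparent why the constant $e$ appears (it is the optimal tilting parameter), at the cost of importing the Stirling-number generating function; the paper's version is fully self-contained and, as a by-product, gives a pointwise bound on the probability of having exactly $k+1$ cycles. Both yield the identical conclusion $t(n)\ge n-\lceil e(\ln n+1)\rceil$.
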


Note that for large $n=2^k$ this bound is stronger than the one 
in Proposition \ref{p38}.

First we show that the probability that a random permutation has a large number of cycles is small.  
The following result is surely known in  a very precise form; 
for completeness we include a self contained elementary proof.

\begin{prop}\label{p31}
    For any $k \ge 0$, the probability that a random permutation in $S_n$ has exactly $k+1$ cycles is at most 
    \[
        \frac{(\ln (n-1) + 1)^k}{n \cdot k!} .
    \]
\end{prop}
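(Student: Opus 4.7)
The plan is to invoke the classical Feller representation of cycle counts and then bound a tail probability of a sum of independent Bernoullis by a standard symmetric-function inequality. Specifically, I would first record that the number of cycles in a uniformly random $\pi \in S_n$ has the same distribution as $X_1 + X_2 + \cdots + X_n$, where $X_i$ are independent Bernoulli random variables with $P(X_i = 1) = 1/i$. This identification follows in one line by comparing probability generating functions: both equal
\[
\frac{x(x+1)(x+2)\cdots(x+n-1)}{n!}.
\]
Since $X_1 \equiv 1$, the problem reduces to bounding $P(X_2 + \cdots + X_n = k)$.

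The key algebraic maneuver is to convert this probability into an elementary symmetric polynomial. Writing the probability as a sum over $k$-subsets $S \subseteq \{2, \ldots, n\}$ indicating which $X_i$ equal $1$, and using the telescoping identity $\prod_{i=2}^n (1 - 1/i) = 1/n$ together with the substitution $\tfrac{1}{i} = (1 - \tfrac{1}{i}) \cdot \tfrac{1}{i-1}$ inside each factor with $i \in S$, I would obtain
\[
P(X_2 + \cdots + X_n = k)
= \frac{1}{n} \sum_{\substack{S \subseteq \{2,\ldots,n\} \\ |S| = k}} \prod_{i \in S} \frac{1}{i-1}
= \frac{1}{n} \, e_k\!\left(\tfrac{1}{1}, \tfrac{1}{2}, \ldots, \tfrac{1}{n-1}\right).
\]
The factor of $1/n$ that appears in the target bound comes out for free from this rearrangement, which is why I find this the cleanest route.

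To finish, I would apply the standard inequality $e_k(a_1, \ldots, a_m) \le (a_1 + \cdots + a_m)^k / k!$, which follows in one line from expanding the $k$-th power and observing that each squarefree monomial contributing to $e_k$ appears $k!$ times. Taking $a_i = 1/i$ and invoking the harmonic bound $H_{n-1} = 1 + \tfrac{1}{2} + \cdots + \tfrac{1}{n-1} \le \ln(n-1) + 1$ (the integral comparison $\sum_{i=2}^{n-1} 1/i \le \int_1^{n-1} dx/x = \ln(n-1)$ after adding the first term) gives exactly the claimed bound. I do not expect any serious obstacle: once Feller's coupling is in place the rest is bookkeeping. The only boundary point worth checking is $n = 2$, where the bound collapses to $H_1 = 1 \le \ln 1 + 1 = 1$, so the inequality persists.
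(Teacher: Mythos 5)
Your proof is correct, and it reaches the paper's key intermediate identity by a genuinely different route. Both arguments ultimately rest on the exact formula
\[
P\bigl(\operatorname{cyc}(\pi)=k+1\bigr)
= \frac{1}{n}\, e_k\!\left(\tfrac{1}{1},\tfrac{1}{2},\ldots,\tfrac{1}{n-1}\right),
\]
and then finish identically: the inequality $e_k(a_1,\ldots,a_m)\le (a_1+\cdots+a_m)^k/k!$ obtained by expanding the $k$-th power, followed by the harmonic bound. The difference is how the identity is derived. The paper counts directly: it writes each permutation in canonical cycle form (each cycle led by its minimum, cycles ordered by increasing leaders) and enumerates the choices, arriving at $\frac{n!}{n}\sum 1/(m_1\cdots m_k)$ over decreasing $k$-tuples in $\{1,\ldots,n-1\}$. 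You instead invoke the Feller representation of the cycle count as a sum of independent Bernoulli variables with success probabilities $1/i$, verified via the classical generating function $x(x+1)\cdots(x+n-1)/n!$ for the unsigned Stirling numbers, and then extract the factor $1/n$ by the telescoping substitution $\tfrac1i=(1-\tfrac1i)\cdot\tfrac1{i-1}$. Your route is slicker and makes the appearance of $e_k$ and of the factor $1/n$ transparent, but it imports the Stirling/Feller identity as a known fact; the paper's version is longer but fully self-contained, which is exactly what the authors say they are aiming for ("for completeness we include a self contained elementary proof"). If you wanted your argument to be equally self-contained you would need to prove the generating function identity, which is most easily done by essentially the same sequential construction the paper uses. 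Your boundary checks ($n=2$, and implicitly $k=0$ where the bound $1/n$ is attained with equality) are fine.
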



\begin{proof}
    We count the number of permutations with $k+1$ cycles as follows.
    Write each cycle of the permutation with the smallest element of the cycle first, and arrange the cycles in increasing order of their first elements.
    The first cycle starts with the element $1$. We can select the next element of the cycle arbitrarily, then the next one, and so on. 
    If the length of this cycle is $n_1$ then this gives $(n-1)(n-2) \cdots (n-n_1+1)$ possibilities. 
    The second cycle must start with the smallest number that was not chosen yet. Proceed in the same manner to choose the other cycle elements. If its length is $n_2$ then this gives $(n-n_1-1)(n-n_1-2) \cdots (n-n_1-n_2+1)$ possibilities.
    Proceeding in this way, we see that the number of permutations with $k+1$ cycles of lengths $n_1, \ldots, n_{k+1}$ (arranged according to the above convention) is
    \[
        \frac{n!}{n(n-n_1)(n-n_1-n_2) \cdots 
    (n-n_1-n_2 \ldots -n_{k})}.
    \]
    Summing over all possibilities of cycle lengths gives 
    \[
        \frac{n!}{n} \cdot 
        \sum \frac{1}{m_1 m_2 \cdots m_{k}} \, ,
    \]
    where the summation is over all $k$-tuples $(m_1, m_2, \ldots, m_{k})$ of integers satisfying $n > m_1 > m_2 > \cdots > m_{k} > 0$.

    Consider now the expression 
    \[
        \left( \frac{1}{1} + \frac{1}{2} + \frac{1}{3} + \ldots + \frac{1}{n-1} \right)^k .
    \]
    In this expression, every product as above appears $k!$ times
    (and there are also additional products with non-distinct factors). 
    Therefore, the total number of permutations with $k+1$ cycles is at most 
    \[
        \frac{n!}{n \cdot k!} \cdot 
        \left( \frac{1}{1} + \frac{1}{2} + \frac{1}{3} + \ldots + \frac{1}{n-1} \right)^k \, ,
    \]
    implying the desired result
    by the standard bound on the harmonic series.  
\end{proof}

\begin{proof}[Proof of Theorem~\ref{t:all_lower}]
    For simplicity, replace the upper bound in Proposition~\ref{p31} by the slightly larger bound
    \[
        p_k := 
        \frac{(\ln n + 1)^k}{n \cdot k!} .
    \]
    Recalling that $k! > (k/e)^k \cdot \sqrt{2 \pi k}$ for every $k\ge 1$, 
    Proposition~\ref{p31} implies that 
    the probability of having $k_0 +1$ cycles, 
    for $k_0 := \lceil e (\ln n + 1) \rceil$, is at most
    \[
        p_{k_0} 
        < \frac{1}{n \cdot \sqrt{2 \pi k_0}} \cdot \left( \frac{e (\ln n + 1)}{k_0} \right)^{k_0}
        \le \frac{1}{n \cdot \sqrt{2 \pi k_0}} \, .
    \]
    Now note that, by definition, for $k \ge k_0$
    \[
        p_{k+1} 
        = p_k \cdot \frac{\ln n + 1}{k + 1}
        \leq p_k \cdot \frac{\ln n + 1}{e(\ln n + 1)}
        = p_k \cdot \frac{1}{e} \, .
    \]
    It follows that the probability of a random permutation in $S_n$ to have more than $k_0$ cycles is at most
    \[
        \sum_{k = k_0}^{n-1} p_k
        < p_{k_0} \cdot \sum_{m = 0}^{\infty} e^{-m}
        < \frac{1}{n} \cdot \frac{1}{\left( 1- e^{-1} \right) \cdot \sqrt{2 \pi k_0}}
        < \frac{1}{n} \, .
    \]
    Therefore, for a random permutation $\pi$, $\cyc(\pi c^j) \le k_0$ for every $j$ with high probability, and hence there exists such a permutation $\pi$.  
\end{proof}
\vspace{0.2cm}



\noindent
{\bf Acknowledgments.} 
We thank Jakob F\"uhrer for pointing out two relevant references,
and thank him and Oriol Sole Pi for helpful comments. 
Noga Alon is supported in part by NSF grant DMS-2154082.

\end{document}